\newtheorem{thm}{Theorem}[section]
\newtheorem{lem}[thm]{Lemma}
\newtheorem{coro}[thm]{Corollary}
\newtheorem{defn}[thm]{Definition}
\newtheorem{rmk}[thm]{Remark}
\numberwithin{equation}{section}
\theoremstyle{remark}
\numberwithin{equation}{section}
\definecolor{esperance}{rgb}{0.0,0.5,0.0}
\newcommand{\bb}{\mathbf{b}}
\newcommand{\be}{\mathbf{e}}
\newcommand{\bp}{\mathbf{p}}
\newcommand{\bq}{\mathbf{q}}
\newcommand{\bs}{\mathbf{s}}
\newcommand{\br}{\mathbf{r}}
\newcommand{\eb}{\overline{e}}
\newcommand{\cC}{\mathcal{C}}
\newcommand{\cD}{\mathcal{D}}
\newcommand{\cG}{\mathcal{G}}
\newcommand{\cP}{\mathcal{P}}
\newcommand{\bP}{\mathfrak{P}}
\newcommand{\bZ}{\mathbb{Z}}
\newcommand{\bN}{\mathbb{N}}
\newcommand{\onto}{\xymatrix{\ar@{>>}[r]&}}
\begin{document}

\title{Notes on the values of volume entropy of graphs
}

\author{WooYeon Kim, Seonhee Lim}

\thanks{}


\subjclass[2000]{Primary  37D40; Secondary 37B40, 92Bxx.}

\date{}

\keywords{}

\begin{abstract} Volume entropy is an important invariant of metric graphs as well as Riemannian manifolds. In this note, we calculate the change of volume entropy when an edge is added to a metric graph or when a vertex and edges around it are added.
In the second part, we estimate the value of the volume entropy which can be used to suggest an algorithm for calculating the persistent volume entropy of graphs.
\end{abstract}

\maketitle

\section{Introduction}
Let $\cG$ be a finite metric graph with more than two vertices. The volume entropy of the graph $\cG$ is defined as the maximal exponential growth rate of the volume of metric ball $B(\tilde x,r)$ of center $\tilde x$ and radius $r$ in the universal cover of $\cG$:
$$ h = \max_{x \in V\cG}  \lim_{r \to \infty} \frac{\log \mathrm{vol} B(\tilde x,r)}{r},$$
where the volume of $B(\tilde x,r)$ is the sum of length of all the edges (or part of the edges) in $B(\tilde x,r) \subset \widetilde{\cG}$. When $\cG$ is connected, the limit does not depend on the vertex $x$ by subadditivity.

Volume entropy, unlike the usual (measure-theoretic) entropy in dynamical systems, is a metric invariant. It is equal to the topological entropy of the geodesic flow \cite{Man}, \cite{Lim} for non-positively curved Riemannian manifold or piecewise Riemmanian manifolds, as well as for finite metric graphs \cite{Lim}, \cite{Lim2}.
The volume entropy is an important metric invariant, and it is related to volume entropy rigidity question in Riemannian geometry \cite{BCG}. See also \cite{Lim} for a metric graph analogue and \cite{McMullen} for a related question.
The pair of topological entropy together with the ergodic period is a complete invariant for the equivalence relation of almost topological conjugacy, in the setting of ergodically supported expansive maps with shadowing property, including Anosov maps \cite{Sun}. As the volume entropy is equal to the topological entropy of the time one map of the geodesic flow, which is Anosov, together with the ergodic period, the pair is a complete invariant for the topological conjugacy. Volume entropy is also related to the first Betti number and systole of weighted graphs \cite{Balacheff}.

Recently, there have been attempts to use the volume entropy as an invariant to distinguish brain networks of certain patients \cite{LKKHLL}. One way to obtain a local invariant is to remove a vertex and all the edges emanating from it.

Another possible approach to use the entropy for networks is to use ``Persistent volume entropy" by introducing a parameter $\varepsilon$ similar to the parameter in persistent homology \cite{Carlsson}. Suppose that we are given a metric graph $\cG$. For given $\epsilon>0$, delete all the edges of length great than $\epsilon$ to obtain a graph $\cG_\varepsilon$ and consider the volume entropy of the graph $\cG_\varepsilon$. Since the graph $\cG$ is finite, the set of edge lengths $\{ \varepsilon_1 < \cdots < \varepsilon_m \}$ is finite and it is exactly the set of $\varepsilon>0$ such that the graph $\cG_{\varepsilon_i}$ strictly contains $\cG_{\varepsilon}$ for any small enough $\epsilon < \epsilon_i$.

In the second part of section 2, we suggest an algorithm of calculating entropy by specifying $\varepsilon$ which will be the threshold for two distinct ways of calculation: Newton's method using characterization in \cite{Lim} (Theorem 4) and a recursive formula for entropy when an edge is added.

Having such applications in mind, in this article, we investigate the change of volume entropy for two types of new graphs obtained from the original graph by either adding an edge or adding a vertex and all the edges emanating from it.

In the first part, we consider a graph $\cG$ and a new graph $\cG'$ obtained by adding an edge $e$. Our main result is the following.

\begin{thm} 
Let $\cG$ be a finite metric graph with two non-adjacent vertices $x$, $y$. Let $\cG'$ be a graph obtained by attaching an edge $e$ between the vertices $x,y$ of length $l_0$. Then the volume entropy $h=h_{\cG'}$ of $\cG'$ satisfies
$$e^{l_0h}=\sqrt{f_{xx}(h)f_{yy}(h)}+f_{xy}(h),$$ where $f_{xx},f_{yy},f_{xy}$ are the generating functions of $\cG$ starting and ending at $x$, starting and ending at $x$, and starting at $x$, ending at $y$, respectively.
\end{thm}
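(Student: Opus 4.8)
Here is how I would approach the proof. Morally, the formula should come from decomposing a reduced path in $\cG'$ into ``excursions'' lying inside $\cG$, glued together along the new edge $e$; the clean way to organize this is through the transfer operator and a Schur‑complement computation. Throughout I take $\cG$ connected, so that $\cG'$ is not a tree.

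\emph{Setup.} Recall (\cite{Lim}, Theorem~4) that for a finite metric graph $\Gamma$ which is not a tree, the volume entropy $h_\Gamma$ is the real number at which the non‑backtracking transfer operator $\cL_s^\Gamma$ on oriented edges --- $(\cL_s^\Gamma\phi)(\vec a)=\sum_{\vec b}e^{-s\ell(\vec b)}\phi(\vec b)$, the sum over oriented edges $\vec b$ with $o(\vec b)=t(\vec a)$ and $\vec b\ne\overline{\vec a}$ --- has spectral radius $1$; since $\rho(\cL_s^\Gamma)$ is continuous and (on the relevant range) strictly decreasing in $s$, by Perron--Frobenius this is equivalent to: $h_\Gamma$ is the largest real zero of $s\mapsto\det(I-\cL_s^\Gamma)$. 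For $s>h_\cG$ one has $(I-\cL_s^\cG)^{-1}=\sum_{k\ge 0}(\cL_s^\cG)^k$, and weighting and summing the entries of this resolvent over the oriented edges issuing from $x$ and entering $y$ (resp.\ $x$, resp.\ $y$) recovers exactly $f_{xy}(s)$ (resp.\ $f_{xx}(s)$, $f_{yy}(s)$, the latter two over nontrivial reduced loops); note also $f_{yx}=f_{xy}$ because reversing a reduced path is a length‑preserving involution.

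\emph{Block decomposition and Schur complement.} Next I would write $\cL_s^{\cG'}$ in block form for the partition (oriented edges of $\cG$)$\,\sqcup\,\{e,\overline e\}$. Adding $e$ creates no new transition among $\cG$‑edges, so the $\cG$‑block is exactly $\cL_s^\cG$; $e$ cannot legally follow $e$ or $\overline e$ and symmetrically (endpoints mismatch, or backtracking), so the $\{e,\overline e\}$‑block vanishes; and the off‑diagonal blocks $C,C'$ merely record that one traverses $e$ (with weight $e^{-s l_0}$) precisely by standing at $x$ pointing outward and emerging at $y$, and symmetrically for $\overline e$. Thus $\cL_s^{\cG'}=\smallmat{\cL_s^\cG & C\\ C' & 0}$. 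In the main case $h:=h_{\cG'}>h_\cG$ (so $I-\cL_s^\cG$ is invertible and the generating functions of $\cG$ converge at $s=h$), the block determinant identity gives, for $s>h_\cG$,
\[
\det(I-\cL_s^{\cG'})=\det(I-\cL_s^\cG)\cdot\det\!\big(I_2-C'(I-\cL_s^\cG)^{-1}C\big),
\]
and a direct computation identifies the $2\times2$ matrix $C'(I-\cL_s^\cG)^{-1}C$ with $e^{-s l_0}\smallmat{f_{xy}&f_{yy}\\ f_{xx}&f_{xy}}$, each entry being $e^{-s l_0}$ times the generating function of reduced $\cG$‑paths between the appropriate endpoints in $\{x,y\}$.

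\emph{Conclusion and the root.} Since the first factor is nonzero at $s=h$, the characterization of $h_{\cG'}$ above forces $h$ to be the largest zero of $(1-e^{-s l_0}f_{xy}(s))^2-e^{-2s l_0}f_{xx}(s)f_{yy}(s)$, i.e.\ $e^{h l_0}=f_{xy}(h)\pm\sqrt{f_{xx}(h)f_{yy}(h)}$. A short monotonicity argument then selects the sign: with $\phi_\pm(s):=e^{s l_0}-f_{xy}(s)\mp\sqrt{f_{xx}(s)f_{yy}(s)}$ one has $\phi_+\le\phi_-$ and $\phi_\pm(s)\to+\infty$ as $s\to\infty$, so the largest zero of $\phi_+\phi_-$ is a zero of $\phi_+$, which is exactly $e^{l_0 h}=\sqrt{f_{xx}(h)f_{yy}(h)}+f_{xy}(h)$. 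I expect the main obstacle to be not a hard estimate but the careful assembly of these pieces: (i) pinning down the exact form of \cite{Lim}, Theorem~4 and the Perron--Frobenius input turning ``$\rho(\cL_s)=1$'' into ``largest zero of $\det(I-\cL_s)$''; (ii) checking that $h=h_{\cG'}$ lies in the half‑line where the resolvent of $\cG$ converges and the $f$'s are finite, which is automatic when $h_{\cG'}>h_\cG$ while the borderline case $h_{\cG'}=h_\cG$ forces $\cG$'s generating functions to be entire (e.g.\ $\cG$ a tree) so that the same computation still applies, or else must be dealt with by continuity in $s$; and (iii) the sign selection together with the bookkeeping convention that $f_{xx},f_{yy}$ count only nontrivial loops.
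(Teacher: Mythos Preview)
Your proposal is correct and takes a genuinely different route from the paper. The paper argues directly and combinatorially: it decomposes a reduced path in $\cG'$ from $x$ to $y$ as an alternating word in $\cG$-paths and copies of $e,\overline e$, groups these into blocks $\bP_k=\bp_k\, e\,(\bq_{ki_1}e\cdots)\,\br_k\,\overline e\,(\bs_{kj_1}\overline e\cdots)$ with $\bp\in\cP_{xx}$, $\bq\in\cP_{yx}$, $\br\in\cP_{yy}$, $\bs\in\cP_{xy}$, and reads off that the generating function of $\cG'$ is a geometric series with ratio $r=f_{xx}f_{yy}\bigl(e^{-l_0t}\sum_{i\ge 0}(e^{-l_0t}f_{xy})^i\bigr)^2$; the entropy is the $t$ at which $r=1$, and the $+$ sign is immediate from positivity of the inner sum. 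Your approach packages the same combinatorics into the non-backtracking transfer operator and a $2\times 2$ Schur complement, which is tidier bookkeeping and generalizes more transparently --- indeed, the paper itself switches to precisely this spectral-radius viewpoint in the next subsection (adding a vertex with several incident edges, Theorem~\ref{thm:4}). What the paper's bare-hands argument buys is that it is entirely elementary (no Perron--Frobenius, no determinant identities) and the sign selection falls out for free; what your argument buys is uniformity and an obvious template for the multi-edge case. Both proofs share the implicit hypothesis $h_{\cG'}>h_\cG$ needed for $f_{xx},f_{yy},f_{xy}$ to converge at $h_{\cG'}$, which you flag explicitly and the paper leaves tacit.
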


\begin{thm}
Let $\cG$ be a finite metric graph with two cycles whose ratioof lengths is Diophantine (see Definition \ref{def:Dio}) and with two non-adjacent vertices $x$, $y$. Let $\cG'$ be a graph obtained by attaching an edge $e$ between $x,y$ of length $l$. Let $h$ and $h'$ be the volume entropy of $\cG$ and $\cG'$, respectively. Then $h'=h+Ce^{-hl}+O(e^{-(1+\gamma)hl}))$ as $l\rightarrow \infty$ for some constant $C>0$ and any $\gamma<1$.
\end{thm}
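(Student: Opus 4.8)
The plan is to read off from Theorem~1 the equation that determines $h_{\cG'}$ and then solve it asymptotically. Write $h=h_{\cG}$, $h'=h_{\cG'}$, and set
\[
F(s):=\sqrt{f_{xx}(s)f_{yy}(s)}+f_{xy}(s),
\]
so that Theorem~1 is the statement $F(h')=e^{lh'}$, with $f_{xx},f_{yy},f_{xy}$ the generating functions of $\cG$. Because $\cG$ has two cycles its volume entropy is $h>0$, and the series defining $f_{xx},f_{yy},f_{xy}$ have abscissa of convergence exactly $h$; hence $F$ is finite and strictly decreasing on $(h,\infty)$ with $F(s)\to\infty$ as $s\to h^{+}$, while $e^{ls}$ is strictly increasing. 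Thus $h'$ is the unique solution of $F(s)=e^{ls}$ in $(h,\infty)$, and $h'>h$ (if $h'=h$ the right-hand side of Theorem~1 would be infinite); comparing with $e^{ls}$ at a fixed $s>h$ gives $h'\to h^{+}$ as $l\to\infty$, and the task is to quantify this.

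The analytic input is the behaviour of $f_{xx},f_{yy},f_{xy}$ at $s=h$. Each is a matrix coefficient of the resolvent $(I-A(s))^{-1}$ of the directed-edge transfer operator $A(s)$ of $\cG$, whose entry on an allowed non-backtracking transition is $e^{-s\ell(\cdot)}$; by the characterization of volume entropy in \cite{Lim}, the Perron eigenvalue $\lam(s)$ of $A(s)$ is real-analytic and strictly decreasing near $h$, with $\lam(h)=1$ and $\lam'(h)<0$. Here the hypotheses enter. Since $\cG$ has two cycles the transfer operator is irreducible, so by Perron--Frobenius $\lam(h)=1$ is a \emph{simple} eigenvalue with strictly positive left and right eigenvectors; and the Diophantine condition on the ratio of the two cycle lengths is the quantitative non-lattice condition ensuring that $1$ is not an eigenvalue of $A(s)$ anywhere else in a neighbourhood of $h$, so that, after splitting off the rank-one Perron part, $(I-A(s))^{-1}$ extends analytically across $s=h$. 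Consequently each generating function has a simple pole there,
\[
f_{pq}(s)=\frac{r_{pq}}{s-h}+a_{pq}+O(s-h)\qquad(pq\in\{xx,yy,xy\}),
\]
with $r_{pq}>0$; taking the square root of $f_{xx}f_{yy}$ and expanding,
\[
F(s)=\frac{R}{s-h}+B+O(s-h),\qquad R:=\sqrt{r_{xx}r_{yy}}+r_{xy}>0,
\]
with $B$ a constant determined by the $r_{pq}$ and $a_{pq}$.

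It remains to solve $F(h')=e^{lh'}$ by a short bootstrap. Put $\del:=h'-h>0$. From the monotonicity picture together with the pole expansion, $F(h')\ge e^{lh}$ forces $\del\le(R+o(1))e^{-lh}$, and then $e^{l\del}=1+o(1)$ forces $\del\ge\tfrac14 Re^{-lh}$ for $l$ large; thus $\del=\Theta(e^{-lh})$ and in particular $l\del\to0$. Substituting the expansion of $F$ into $F(h')=e^{lh}e^{l\del}$ and multiplying by $\del$,
\[
\del\,e^{l\del}\,e^{lh}=R+B\del+O(\del^{2}).
\]
Using $e^{l\del}=1+O(l\del)=1+O(le^{-lh})$ and $\del\,e^{l\del}=\del+O(l\del^{2})$ gives $\del=Re^{-lh}+O(le^{-2lh})$, that is,
\[
h'=h+Re^{-lh}+O\!\bigl(le^{-2lh}\bigr).
\]
For every $\ga<1$ one has $le^{-2lh}=e^{-(1+\ga)hl}\cdot le^{-(1-\ga)hl}=O(e^{-(1+\ga)hl})$, which is the asserted remainder; so the theorem holds with $C=R>0$.

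The main obstacle is the middle step: showing that $f_{xx},f_{yy},f_{xy}$ have a simple pole at $s=h$ with positive residue and, once it is removed, are analytic in a neighbourhood of $h$. This is exactly where the Diophantine hypothesis is indispensable; it plays the role of the non-lattice condition familiar from the theory of transfer operators and dynamical zeta functions, excluding the lattice/oscillatory behaviour that would otherwise destroy the clean Laurent expansion and leave only control of the leading term along subsequences. Once this spectral input is available, the remaining bootstrap is elementary.
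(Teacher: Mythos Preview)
Your argument is sound and the final bootstrap is clean, but it diverges from the paper at the key analytic step, and your account of where the Diophantine hypothesis enters is not correct.

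The paper never invokes the resolvent of the transfer operator. It quotes the counting asymptotic $N(r)=Ce^{hr}\bigl(1+O(r^{-n})\bigr)$ of Broise-Alamichel, Parkkonen and Paulin---this is where the Diophantine hypothesis is actually consumed---feeds it through the Laplace-transform identity of Theorem~\ref{thm:2}, and obtains only the weaker expansion
\[
f_{pq}(t)=\frac{C_{pq}\,t}{t-h}\bigl(1+O((t-h)^{\gamma})\bigr)\qquad(0<\gamma<1)
\]
of Theorem~\ref{thm:3}. Substituting into $e^{lh'}=\sqrt{f_{xx}(h')f_{yy}(h')}+f_{xy}(h')$ and rearranging gives $e^{lh}(h'-h)=C+O((h'-h)^{\gamma})$ with $C=(\sqrt{C_{xx}C_{yy}}+C_{xy})h$, hence $h'=h+Ce^{-hl}+O(e^{-(1+\gamma)hl})$. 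The $(t-h)^{\gamma}$ remainder in $f_{pq}$ is precisely what limits the final error to $O(e^{-(1+\gamma)hl})$ for each $\gamma<1$, rather than your sharper $O(le^{-2hl})$.

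Your route does produce the full Laurent expansion $f_{pq}(s)=\dfrac{r_{pq}}{s-h}+a_{pq}+O(s-h)$, but not for the reason you give. Because the edge transfer matrix $A(s)$ is finite with entries entire in $s$, the resolvent $(I-A(s))^{-1}$ is automatically meromorphic; the pole at $s=h$ is simple since the Perron eigenvalue of $A(h)$ is simple and $\lambda'(h)<0$, and this alone forces analyticity of the remainder in a neighbourhood of $h$---no Diophantine input is needed. The Diophantine (quantitative non-lattice) condition governs the \emph{other} zeros of $\det(I-A(s))$ on the line $\operatorname{Re}s=h$, which control the oscillation of $N(r)$ but are irrelevant to the local real-variable expansion of $f_{pq}$ at $s=h$ that you actually use. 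In effect your argument proves the theorem without the Diophantine hypothesis and with a better error term; you should either say so explicitly, or, if you want the hypothesis to do genuine work, replace the resolvent sketch by the Laplace-transform route the paper takes.
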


Note that Diophantine condition  is generically satisfied by Khintchine Theorem. See Remark 2.10.

The proofs of the above theorems use various properties of generating functions.

\section{volume entropy change when adding an edge or a vertex}
Let us denote by $V\cG=\{ v_{1}, \cdots ,v_{k} \} $ the vertex set of $\cG$ and by $E\cG$ the set of oriented edges of $\cG$.

\subsection{Basic properties of entropy}
We denote an edge from $x$ to $y$ by $[x,y]$. We denote by $B(x,r)$ a ball of radius $r$ centered at $x$ in $\cG$. 
\begin{defn} 

\begin{enumerate}
\item By a \emph{path}, we mean a concatenation of adjacent edges of consistent orientation, or in other words, a metric path starting and ending at some vertices. For a \emph{path} $\bp$, we denote its length by $l(\bp)$.
\item For $x,y \in V\cG$, let us denote by $\cP_{xy}$ the set of paths without backtracking starting from $x$ and ending at $y$. Also denote $\displaystyle\bigcup_{y \in V\cG}\cP_{xy}$ by $\cP_{x}$.
\item Define the \emph{generating function of $\cG$ from $x$ (from $x$ to $y$}) by 
$$f_{x}(t)=\displaystyle\sum_{\bp\in{\cP_{x}}}e^{-l(\bp)t} \quad (f_{xy}(t)=\displaystyle\sum_{\bp\in{\cP_{xy}}}e^{-l(\bp)t},\; \mathrm{ resp}),$$ 
where $t \in \mathbb{R}.$
\item Define $N_{xy}(r)$ to be the number of paths in $\cG$ from $x$ to $y$ of length less than $r$ and $N_x(r)$ to be the number of paths in $\cG$ from $x$ of length less than $r$.
\item Define $vol(B(\tilde{x},r))$ to be the sum of length of all the edges in the metric ball $B(\tilde{x},r)$ of center $\tilde{x}$ and radius $r$ in the universal cover $\tilde{\cG}$ of $\cG$.
\end{enumerate}
\end{defn}
The folllowing lemma is a basic observation (see Lemma1 \cite{Lim} for a proof).
\begin{lem}\label{lem:0} For any vertex $x \in V\cG$, the volume entropy $h$ is the exponential growth rate of $N_x(r)$, i.e. $h=\displaystyle\lim_{r\rightarrow\infty}\frac{\log(N_x(r))}{r}$.
\end{lem}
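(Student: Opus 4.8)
The plan is to realize the count $N_x(r)$ geometrically inside the universal cover $\widetilde{\cG}$ and then to sandwich $\mathrm{vol}(B(\tilde x,r))$ between two fixed multiples of $N_x(r)$, forcing the two exponential growth rates to coincide. The starting point is that $\widetilde{\cG}$ is a tree: every vertex $\tilde v$ of $\widetilde{\cG}$ is joined to the fixed lift $\tilde x$ by a unique geodesic, and projecting that geodesic down to $\cG$ gives a path without backtracking from $x$ whose length is exactly $d(\tilde x,\tilde v)$. Conversely, every element of $\cP_x$ lifts to a unique geodesic issuing from $\tilde x$. This sets up a bijection between $\cP_x$ and the vertex set of $\widetilde{\cG}$ (up to the trivial constant path), under which length corresponds to distance from $\tilde x$; hence $N_x(r)$ agrees, up to an additive constant, with the number of vertices $\tilde v$ of $\widetilde{\cG}$ satisfying $d(\tilde x,\tilde v)<r$.

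First I would record the two elementary bounds that come from finiteness of $\cG$. Let $\ell_{\min}>0$ and $\ell_{\max}<\infty$ be the smallest and largest edge lengths of $\cG$, and let $D$ be the maximal vertex degree. For the lower bound, each vertex $\tilde v\neq\tilde x$ lying in $B(\tilde x,r)$ carries the last edge of its geodesic to $\tilde x$; both endpoints of that edge lie within distance $r$, so the edge sits fully inside $B(\tilde x,r)$, and distinct vertices yield distinct such edges. This gives $\mathrm{vol}(B(\tilde x,r))\ge \ell_{\min}\,(N_x(r)-1)$. For the upper bound, any edge meeting $B(\tilde x,r)$ has its endpoint nearer to $\tilde x$ at distance $<r$, hence lying in $B(\tilde x,r)$; since each such vertex carries at most $D$ incident edges and each edge contributes length at most $\ell_{\max}$, I obtain $\mathrm{vol}(B(\tilde x,r))\le \ell_{\max}\,D\,N_x(r)$.

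It then remains to take logarithms, divide by $r$, and let $r\to\infty$. The additive constants $\log\ell_{\min}$ and $\log(\ell_{\max}D)$ disappear in the limit, and since $N_x(r)\to\infty$ we have $\log(N_x(r)-1)\sim\log N_x(r)$. By connectedness of $\cG$ the growth rate $\lim_{r\to\infty} r^{-1}\log\mathrm{vol}(B(\tilde x,r))$ exists and equals $h$ for every choice of $x$. The upper bound then forces $\liminf_{r\to\infty} r^{-1}\log N_x(r)\ge h$, while the lower bound forces $\limsup_{r\to\infty} r^{-1}\log N_x(r)\le h$; hence the limit exists and equals $h$.

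I expect the only genuine care to be needed in the bijection of the first paragraph and in the bookkeeping of partial edges in the upper bound: one must be sure that every edge contributing to $\mathrm{vol}(B(\tilde x,r))$ is charged to a vertex actually counted by $N_x(r)$, with neither double-counting nor omission of edges that merely dip into the ball. Everything else is the standard squeeze, and the conditions $\ell_{\min}>0$ and $D<\infty$, both automatic for a finite metric graph, are exactly what guarantee that the two bounds share the same exponential rate.
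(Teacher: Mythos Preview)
Your argument is correct. The paper does not supply its own proof of this lemma: it simply calls the result ``a basic observation'' and refers the reader to Lemma~1 of \cite{Lim}. Your sandwich argument---identifying $N_x(r)$ (up to an additive constant) with the number of vertices of $\widetilde{\cG}$ in $B(\tilde x,r)$ via the tree structure, and then bounding $\mathrm{vol}(B(\tilde x,r))$ above and below by fixed multiples of that count---is exactly the standard proof one expects and is in the spirit of what the cited reference does. One small remark: your clause ``since $N_x(r)\to\infty$'' tacitly excludes the degenerate case in which $\cG$ is a tree, but there both $h$ and the growth rate of $N_x(r)$ vanish and the statement is trivial, so this is harmless.
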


Since the graph $\cG$ is finite, by summing over all the vertices of $\cG$, the volume entropy is also the exponential growth rate of the number of paths of length at most $r$.

There is a trivial upper bound of $h$: if $k+1$ is the maximum of the degree of vertices and $l$ is the minimum of the edge lengths, then the volume of a ball of radius $r$ is bounded above by $l \cdot k^{r/l+1}$, thus $h$ is bound above by $\frac{ \log k}{l}$. The next lemma gives another characterization of the volume entropy using the generating functions.
\begin{lem} Let $l$ be a length function on a set $\cP_{x}$. 
Let $f_{x}(t)=\sum_{\bp \in \cP_{x}} e^{-l(\bp) t}$ be the generating function of $\cG$ from $x$. Then, the volume entropy $h$ is the infimum of $t>0$ for which $f_{x}(t)$ converges.
\end{lem}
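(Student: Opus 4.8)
The plan is to recognize the quantity to be computed as the abscissa of convergence of the Dirichlet-type series $f_x(t)=\sum_{\bp\in\cP_x}e^{-l(\bp)t}$ and to identify it with the exponential growth rate of the counting function $N_x(r)=\#\{\bp\in\cP_x:\ l(\bp)<r\}$, which by Lemma~\ref{lem:0} equals $h$. Write $h^\ast=\inf\{t>0:\ f_x(t)<\infty\}$. Since the graph is finite, $N_x(r)<\infty$ for every $r$ (a non-backtracking path of length $<r$ uses boundedly many edges), and the trivial bound $h\le(\log k)/l$ recalled above together with the first step below shows that $\{t>0:\ f_x(t)<\infty\}$ is nonempty; so $h^\ast$ is well defined and finite. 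It then remains to prove the two inequalities $h^\ast\le h$ and $h\le h^\ast$.

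First I would show that $f_x(t)<\infty$ for every $t>h$, which gives $h^\ast\le h$. Fix such a $t$, choose $\ve>0$ with $h+\ve<t$, and use Lemma~\ref{lem:0} to pick $R$ with $N_x(r)\le e^{(h+\ve)r}$ for all $r\ge R$. Grouping the paths according to the integer part of their length and using $t>0$,
\[
f_x(t)=\sum_{n\ge 0}\ \sum_{\substack{\bp\in\cP_x\\ n\le l(\bp)<n+1}}e^{-l(\bp)t}\ \le\ \sum_{n\ge 0}e^{-nt}\bigl(N_x(n+1)-N_x(n)\bigr)\ \le\ \sum_{n\ge 0}e^{-nt}N_x(n+1).
\]
The finitely many terms with $n+1\le R$ sum to a finite number, while the tail is dominated by $e^{h+\ve}\sum_{n\ge R}e^{(h+\ve-t)n}$, a convergent geometric series because $h+\ve-t<0$. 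Hence $f_x(t)<\infty$, so the convergence set contains $(h,\infty)$ and $h^\ast\le h$.

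Next I would show that $f_x(t)<\infty$ forces $t\ge h$, which gives $h\le h^\ast$. If $f_x(t)=M<\infty$, then for every $r>0$, since $t>0$ implies $e^{-l(\bp)t}\ge e^{-rt}$ whenever $l(\bp)<r$,
\[
M\ \ge\ \sum_{\substack{\bp\in\cP_x\\ l(\bp)<r}}e^{-l(\bp)t}\ \ge\ e^{-rt}\,N_x(r),
\]
so that $\tfrac1r\log N_x(r)\le t+\tfrac1r\log M$; letting $r\to\infty$ and applying Lemma~\ref{lem:0} gives $h\le t$, hence $h\le h^\ast$. Combining the two steps yields $h^\ast=h$, which is the assertion.

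I do not expect a genuine obstacle: this is the classical computation of the abscissa of convergence of a Dirichlet series with nonnegative coefficients. The only point needing a little care is the passage from the real-valued path lengths $l(\bp)$ to the integer grouping in the first step, which is handled above via the monotonicity of $N_x$, together with the observation that $N_x(r)$ is finite for each $r$ because $\cG$ is a finite graph with edge lengths bounded below.
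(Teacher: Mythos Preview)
Your proof is correct and follows essentially the same approach as the paper: both group paths by integer length to show $f_x(t)<\infty$ for $t>h$, and both use the trivial lower bound $f_x(t)\ge e^{-rt}N_x(r)$ to conclude that convergence of $f_x(t)$ forces $t\ge h$. Your second step is written as a direct implication rather than as a proof by contradiction as in the paper, and you add a few more words on the finiteness of $N_x(r)$ and the well-definedness of $h^\ast$, but these are cosmetic differences.
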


\begin{proof}
Let  $h_0$ be the infimum of $t$ for which $f_{x}(t)$ converges. Denote by $N(r)$ the number of $\bp \in \cP_x$ satisfying $l(\bp)<r$. 
For $t>h$, 
$$f_{x}(t)=\displaystyle\sum_{n=0}^{\infty}\displaystyle\sum_{\bp: n\leq l(\bp)<n+1} e^{-l(\bp) t}\leq \displaystyle\sum_{n=0}^{\infty} N_{x}(n+1) e^{-nt} = \displaystyle\sum_{n=0}^{\infty} e^{n(h-t+o(1))}.$$
It follows that $f_{x}(t)$ converges when $t>h$, thus we obtain $h\geq h_0$. 

On the other hand, if $h>h_0$, then choose $t$ such that $h>t>h_0$. For $r>0,$ 
$$f_{x}(t)e^{tr}=\displaystyle\sum_{\bp \in \cP_x} e^{(r-l(\bp))t}\geq \displaystyle\sum_{\{\bp \in \cP_x|l(\bp)<r\}} e^{(r-l(\bp))t}.$$ 
For $l(\bp)<r$, we have $e^{(r-l(\bp))t}>1$ thus $f_{x}(t)e^{tr} \geq N(r)$. 
Taking the log and letting $r$ go to infinity, we obtain
$ t \geq h,$
a contradiction. Thus $h \leq h_0$.
 \end{proof}

\begin{coro}\label{lem:1} If $\cG$ is connected, the volume entropy of $\cG$ is the infimum of $t>0$ for which the generating function $f_{xy}(t)$ of $\cG$ converges, for any $x,y \in V\cG$. \end{coro}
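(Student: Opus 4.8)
The plan is to deduce this from the preceding lemma together with two facts: that $f_x$ is a finite sum of the $f_{xy}$'s, and that the abscissa of convergence of $f_{xy}$ does not depend on the endpoint $y$ once $\cG$ is connected.

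First, since $\cP_x=\bigsqcup_{y\in V\cG}\cP_{xy}$ and $V\cG$ is finite, we have $f_x(t)=\sum_{y\in V\cG}f_{xy}(t)$, a finite sum of non-negative series; hence $f_x(t)$ converges if and only if $f_{xy}(t)$ converges for every $y$. Writing $h_{xy}$ for the infimum of those $t>0$ at which $f_{xy}$ converges, the preceding lemma then gives $h=\max_{y\in V\cG}h_{xy}$; in particular $h_{xy}\le h$ for all $x,y$, and it remains only to show that, for a fixed $x$, all the numbers $h_{xy}$ coincide.

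The heart of the matter is the claim that $h_{xu}=h_{xw}$ whenever $u$ and $w$ are joined by an edge $e$ (oriented from $u$ to $w$, with reverse $\bar e$). To prove it I would partition $\cP_{xw}$ according to whether a path ends with the edge $e$ or not. A path not ending with $e$ can be extended by $\bar e$ without creating a backtrack, which gives an injection $\cP_{xw}\to\cP_{xu}$ increasing lengths by $l(e)$; a path ending with $e$ is $\bp'e$ for a unique $\bp'\in\cP_{xu}$, and $\bp\mapsto\bp'$ decreases lengths by $l(e)$. Summing $e^{-l(\bp)t}$ over each part yields, for every $t>0$, a bound of the shape $f_{xw}(t)\le(e^{l(e)t}+e^{-l(e)t})\,f_{xu}(t)+O(1)$, so convergence of $f_{xu}(t)$ forces convergence of $f_{xw}(t)$, whence $h_{xw}\le h_{xu}$; exchanging the roles of $u$ and $w$ (using $\bar e$ in place of $e$) gives the reverse inequality, so $h_{xu}=h_{xw}$. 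The one delicate point is keeping track of the backtracking condition at the vertex where one concatenates, which is precisely why the partition into ``ends with $e$'' versus ``does not'' is needed; this is the sole, and minor, obstacle.

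Finally, connectedness of $\cG$ means that any two vertices are joined by a chain of successively adjacent vertices, so the equality $h_{xu}=h_{xw}$ for adjacent $u,w$ propagates to $h_{xy}=h_{xy'}$ for all $y,y'$. Together with $\max_y h_{xy}=h$ this gives $h_{xy}=h$ for every $y$; as $x$ was arbitrary and $h$ is a single number independent of the base vertex (Lemma \ref{lem:0}), we conclude that $h_{xy}=h$ for all $x,y\in V\cG$, as claimed.
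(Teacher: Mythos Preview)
Your argument is correct and rests on the same idea as the paper's: once one knows from the preceding lemma that $h$ is the abscissa of convergence of $f_x=\sum_{y}f_{xy}$, it suffices to show that $h_{xy}$ is independent of $y$, and this is done by comparing paths ending at two adjacent vertices and then propagating along the connected graph.

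The execution differs slightly. The paper works with the counting functions and simply asserts the sandwich $N_{xy'}(r-l)\le N_{xy}(r)\le N_{xy'}(r+l)$ for $y,y'$ at distance $l$, then takes exponential growth rates. You instead work directly with the generating functions and split $\cP_{xw}$ according to whether the last edge is $e$, obtaining $f_{xw}(t)\le (e^{l(e)t}+e^{-l(e)t})f_{xu}(t)+O(1)$. Your route is a touch longer, but it has the merit of treating the no-backtracking constraint honestly: the paper's sandwich, read literally, would require concatenating a fixed segment from $y'$ to $y$, which can create a backtrack, whereas your partition-and-inject maps are genuine injections between non-backtracking path sets. Either way the conclusion $h_{xy}=h$ for all $x,y$ follows.
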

\begin{proof} Let $h_{xy}$ be the exponential growth rate of the number of paths from $x$ to $y$ of length at most $r$. Denoting by $N_{xy}(r)$ the number of paths from $x$ to $y$ of length at most $r$ and $h_{xy}=\displaystyle\lim_{r\rightarrow\infty}\frac{\log(N_{xy}(r))}{r}$.  Suppose that $y'$ is connected to $y$ and the distance between them is $l$.We have
$$N_{xy'} (r-l) \leq N_{xy} (r) \leq N_{xy'} (r+ l).$$
Again, by taking the exponential growth rate, we have $h_{xy} = h_{xy'}$.
\end{proof}

For a general graph $\cG$, it is immediate that the volume entropy is the maximum of the volume entropy of connected components.

\subsection{Adding an edge} In this subsection, we investigate how entropy changes when we add an edge to a given graph $\cG$.  
Let $\cG'$ be the graph obtained by attaching an edge $e=[x,y]$ of length $l_0$, i.e.
$$V\cG'=V\cG, E\cG'=E\cG\cup\{e, \overline{e}\},$$
where $\overline{e}=[y,x]$ in $\cG$.
We assume that two vertices $x$ and $y$ in $\cG$ are not adjacent.  

\begin{thm}\label{thm:1} Let $\cG$ be a finite metric graph with two non-adjacent vertices $x$, $y$. Let $\cG'$ be a graph obtained by attaching an edge $e$ between $x,y$ of length $l_0$. Then the volume entropy $h=h_{\cG'}$ of $\cG'$ satisfies
$$e^{l_0h}=\sqrt{f_{xx}(h)f_{yy}(h)}+f_{xy}(h),$$ where $f_{xx},f_{yy},f_{xy}$ are generating functions of $\cG$, from $x$ to $x$, from $y$ to $y$, and from $x$ to $y$, respectively.
\end{thm}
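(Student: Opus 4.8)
The plan is to apply Corollary \ref{lem:1}: assuming $\cG$ connected (so that $\cG'$ is connected too), the entropy $h=h_{\cG'}$ is the abscissa of convergence of the generating function $f'_{xx}(t)=\sum_{\bp}e^{-l(\bp)t}$, summed over non-backtracking paths in $\cG'$ from $x$ to $x$. It then remains to express this function through $f_{xx},f_{yy},f_{xy}$ and to locate its first singularity.

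First I would decompose each non-backtracking path in $\cG'$ from $x$ to $x$ according to its traversals of the new edge $e$. Every such path is uniquely a concatenation $\bp_0\,\gamma_1\,\bp_1\cdots\gamma_n\,\bp_n$ with $n\ge0$, where each $\gamma_i\in\{e,\overline e\}$ is a traversal of the new edge and each $\bp_i$ is a non-backtracking path in $\cG$ between vertices of $\{x,y\}$; the only constraints are that $\bp_1,\dots,\bp_{n-1}$ be non-trivial (otherwise $\gamma_{i+1}$ would be the reverse of $\gamma_i$) and that $\bp_0$, resp.\ $\bp_n$, be trivial only when it is a closed path at $x$ --- here the non-adjacency of $x$ and $y$ guarantees no edge of $\cG$ is confused with $e$ or $\overline e$. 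Summing $e^{-l(\bp)t}$ over all such concatenations and writing $z=e^{-l_0t}$, the term $n=0$ contributes $f_{xx}(t)$, while the internal structure is governed by the $2\times2$ matrix
\[
G(t)=\begin{pmatrix} f_{xy}(t) & f_{yy}(t)\\ f_{xx}(t) & f_{xy}(t)\end{pmatrix}
\]
(indexed by the two orientations of $e$), yielding an identity of the form $f'_{xx}(t)=f_{xx}(t)+z\,P(t)\bigl(I-zG(t)\bigr)^{-1}Q(t)$, with $P=(1+f_{xx},\,f_{xy})$ and $Q$ the column vector $(f_{xy},\,1+f_{xx})$ coming from the rules on $\bp_0,\bp_n$.

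By Corollary \ref{lem:1} for $\cG$ the functions $f_{xx},f_{yy},f_{xy}$ are finite for $t>h_\cG$ ($h_\cG$ the entropy of $\cG$), so on $(h_\cG,\infty)$ the only candidate singularities of $f'_{xx}$ are the zeros of $\det\bigl(I-zG(t)\bigr)=(1-zf_{xy})^2-z^2f_{xx}f_{yy}$. Since $zG(t)$ is non-negative, its Perron eigenvalue is $\psi(t):=z\bigl(f_{xy}(t)+\sqrt{f_{xx}(t)f_{yy}(t)}\bigr)$ and $\det(I-zG(t))=(1-\psi(t))(1-\psi_2(t))$ with $|\psi_2|\le\psi$. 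The function $\psi$ is continuous, strictly decreasing on $(h_\cG,\infty)$, tends to $0$ at $+\infty$, and tends to $+\infty$ as $t\downarrow h_\cG$ unless $\cG$ is a tree (the series defining $f_{xy}$ genuinely diverges at its abscissa $h_\cG$ by Perron--Frobenius; the tree case is immediate), so there is a unique $t^\ast>h_\cG$ with $\psi(t^\ast)=1$, and $\det(I-zG(t))>0$ for $t>t^\ast$. I would then identify $t^\ast$ with $h$: for $t>t^\ast$ the displayed identity gives $f'_{xx}(t)<\infty$; for $h_\cG<t<t^\ast$ one has $\psi(t)>1$, and then already the sub-sum of $f'_{xx}(t)$ over paths that cross $e$, namely $z\,P\bigl(\sum_{m\ge0}(zG)^m\bigr)Q$, is infinite by comparison with $\sum_m\psi(t)^m$; and for $t\le h_\cG$ already $f_{xx}(t)=\infty$. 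Hence $h=t^\ast$, i.e.\ $\psi(h)=1$, which is precisely $e^{l_0h}=\sqrt{f_{xx}(h)f_{yy}(h)}+f_{xy}(h)$ --- the $+$ sign being forced because $\psi$ is the \emph{Perron} (largest) eigenvalue of $zG$. The main obstacle is the combinatorial step: checking carefully that $\bp_0\gamma_1\cdots\gamma_n\bp_n$ is a genuine bijection with the correct triviality rules; a lesser subtlety is that the numerator of the rational expression for $f'_{xx}$ might vanish at $t^\ast$, which is why the divergence for $t<t^\ast$ is best read off the matrix geometric series rather than the closed form.
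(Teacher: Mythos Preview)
Your argument is correct and shares the paper's core idea---decompose non-backtracking paths in $\cG'$ according to their successive traversals of the new edge $e$, then express the resulting generating function through the $f_{xx},f_{yy},f_{xy}$ of $\cG$ and locate the abscissa of convergence---but the bookkeeping differs. You encode the transitions between consecutive traversals by the $2\times2$ transfer matrix $G(t)$ and read off the critical equation from its Perron eigenvalue $\psi(t)=e^{-l_0t}\bigl(f_{xy}+\sqrt{f_{xx}f_{yy}}\bigr)=1$; the paper instead works with paths from $x$ to $y$, groups consecutive traversals of the \emph{same} orientation into ``runs'', and thereby collapses the computation to a scalar geometric series with ratio $r=f_{xx}f_{yy}\bigl(e^{-l_0t}/(1-e^{-l_0t}f_{xy})\bigr)^2$, whose condition $r=1$ is algebraically the same as your $\psi=1$. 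Your matrix formulation is cleaner and avoids the somewhat informal handling of initial/terminal blocks in the paper's write-up; it is also exactly the viewpoint the paper adopts in the next subsection when a vertex with several incident edges is added (Theorem~\ref{thm:4}), so you have in effect anticipated that generalization. One small point to tidy: make explicit whether $f_{xx},f_{yy}$ include the trivial path, since your matrix entries require non-trivial closed paths while your boundary vectors $P,Q$ allow the trivial one; the paper's convention (implicit in its ratio $r$) is that $f_{xx},f_{yy}$ count only non-trivial paths.
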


\begin{proof} 
Any path in $\cG'$ from $x$ to $y$ can be represented as a concatenation 
$$\bb_1 \be_1 \cdots \bb_{n-1} \be_n \bb_n $$
of paths $\bb_i$ in $\cG$ and the new edge $\be_i = e$ or $\eb$. 

Each path $\bb_i$ belongs to one of $\cP_{xx}, \cP_{xy}, \cP_{yx}, \cP_{yy}$ depending on $\be_{i-1}$ and $\be_i$. For example, $\bb_i \in \cP_{yx}$ if $\be_{i-1}=\be_i = e$ and $\bb_i \in \cP_{xx}$ if $\be_{i-1} =\eb, \be_i = e$. 

Note that if $\bb_i \in \cP_{xx}$ ($\bb_i \in \cP_{yy}$), then $\bb_{i+1} \in \cP_{yx} \cup \cP_{yy}$ ($\bb_{i+1} \in \cP_{xy} \cup \cP_{xx},$ resp.) since they are seperated by $e$ ($\eb$, resp.).
Therefore, 
one can subdivide any given path into a concatenation 
$$ \prod_{i=1}^m \bP_k =  \prod_{k=1}^{m} \bp_k e (\bq_{k{i_1}} e \cdots \bq_{k{i_k}} e) \br_k \eb (\bs_{k{j1}} \eb \cdots \bs_{k{j_k}} \eb) $$
of paths $\bP_k = \bp_k e (\bq_{k{i_1}} e \cdots \bq_{k{i_k}} e) \br_k \eb (\bs_{k{j_1}} \eb \cdots \bs_{k{j_k}} \eb),$ where we denote paths in $\cP_{xx}, \cP_{yx}, \cP_{yy}, \cP_{xy}$ by $\bp, \bq, \br, \bs$, respectively and $i_k, j_k$ are nonnegative. When $i_k=0$($j_k=0$), $\bq_0$($s_0$) is empty path and we have consecutive $e$'s.
Each path $\bP_k$ is determined by paths in $\cP_{xx}, \cP_{yx}, \cP_{yy}, \cP_{xy}$ and the value $e^{-l(\bP_k)t}$ of the path $\bP_k$ is a summand of
$$ f_{xx} e^{-l_0t} (f_{xy} e^{-l_0t})^{i_k} f_{yy}  e^{-l_0t} (f_{yx} e^{-l_0t})^{j_k} .$$
since $f_{xy} = f_{yx}$. Thus the original path $\prod_{k=1}^m \bP_k$ is a summand of the $m$-th term of the geometric series of common ratio

$\begin{aligned}
r
&= f_{xx}f_{yy } \left( e^{-l_0t}\sum_{i=0}^\infty (e^{-l_0t}f_{xy})^i \right)^2 = f_{xx}f_{yy } \left(\frac{e^{-l_0t}}{1-e^{-l_0t}f_{xy}} \right)^2\\
&=f_{xx}f_{yy}e^{-l_0t}\left(\displaystyle\sum_{i=0}^{\infty}(e^{-l_0t}f_{xy})^i\right)^2 .
\end{aligned}$

Therefore, the generating function of $\cG^\prime$ from $x$ to $y$ converges when $r<1$ and diverges when $r>1$. By Lemma~\ref{lem:1}, the volume entropy $h_{\cG'}$ is a zero of 
$$1=f_{xx}(t)f_{yy}(t) \left( \frac{e^{-l_0t}}{1-e^{-l_0t}f_{xy}(t)}\right)^2.$$
 Since $\frac{e^{-l_0t}}{1-e^{-l_0t}f_{xy}(t)}=e^{-l_0t}\displaystyle\sum_{i=0}^\infty (e^{-l_0t}f_{xy})^i$ is positive,\\ $$e^{l_0h_{\cG'}}=\sqrt{f_{xx}(h_{\cG'})f_{yy}(h_{\cG'})}+f_{xy}(h_{\cG'})$$ holds. \end{proof}

\begin{thm}\label{thm:2} Let $f$ be $f_{x}$ or $f_{xy}$. Then  $\frac{f(t)}{t}$ is the Laplace transform of $N(r)$ ($N_{x}(r)$ or $N_{xy}(r)$, respectively), i.e. for $t>0$, $$f(t)=t\int_0^\infty N(r)e^{-tr}dr$$.
\end{thm}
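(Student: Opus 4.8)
The plan is to recognise the stated identity as the classical formula expressing a Dirichlet-type sum as the Laplace transform of its counting function, and to prove it by Tonelli's theorem, since every quantity involved is nonnegative.

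First I would write, for $\cP$ standing for either $\cP_x$ or $\cP_{xy}$,
\[
N(r)=\sum_{\bp\in\cP}\mathbf{1}_{\{l(\bp)<r\}},
\]
observing that for each fixed $r$ this sum is finite because a finite metric graph has only finitely many non-backtracking paths of length at most $r$; in particular $N$ is a nondecreasing step function, hence Lebesgue measurable, so $\int_0^\infty N(r)e^{-tr}\,dr$ is well defined with values in $[0,+\infty]$. Next, for $t>0$, I would interchange the sum and the integral by Tonelli's theorem (all summands and the integrand are nonnegative) and compute
\begin{align*}
t\int_0^\infty N(r)e^{-tr}\,dr
&= t\int_0^\infty \Bigl(\sum_{\bp\in\cP}\mathbf{1}_{\{l(\bp)<r\}}\Bigr)e^{-tr}\,dr
= t\sum_{\bp\in\cP}\int_{l(\bp)}^\infty e^{-tr}\,dr\\
&= t\sum_{\bp\in\cP}\frac{e^{-t\,l(\bp)}}{t}
= \sum_{\bp\in\cP}e^{-t\,l(\bp)}
= f(t),
\end{align*}
where I used that $\{r>0:\,l(\bp)<r\}=(l(\bp),\infty)$. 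This is exactly the asserted formula.

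Finally I would point out that the equality is to be read in the extended reals $[0,+\infty]$, so that no convergence hypothesis is actually needed: by Lemma~\ref{lem:0} one has $N(r)=e^{(h+o(1))r}$, so for $t>h$ both sides of the identity are finite, while for $0<t\le h$ both sides equal $+\infty$ simultaneously. There is essentially no serious obstacle in this argument; the only points that deserve a sentence of care are the finiteness and measurability of $N(r)$ and the legitimacy of the interchange of sum and integral, both of which are immediate from nonnegativity via Tonelli. An alternative and equally short route is to interpret $f(t)=\int_{[0,\infty)}e^{-tr}\,dN(r)$ as a Riemann--Stieltjes integral and integrate by parts, using $N(0)=0$ and $e^{-tr}N(r)\to 0$ as $r\to\infty$ for $t>h$; I would present the Tonelli computation as the main argument since it avoids all integrability bookkeeping.
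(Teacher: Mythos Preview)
Your proof is correct and cleaner than the paper's. The paper proceeds by a discretisation argument: for each $\delta>0$ it partitions the length axis into intervals $[n\delta,(n+1)\delta]$, bounds $f(t)$ above and below by sums of the form $\sum_n \{N((n+1)\delta)-N(n\delta)\}e^{-n\delta t}$, applies Abel summation to rewrite this as $\sum_n N(n\delta)e^{-n\delta t}(\delta t + O(\delta^2 t^2))$, and then lets $\delta\to 0$ so that the Riemann sums converge to $t\int_0^\infty N(r)e^{-tr}\,dr$. Your approach bypasses this limiting procedure entirely by writing $N(r)=\sum_{\bp}\mathbf{1}_{\{l(\bp)<r\}}$ and invoking Tonelli to swap sum and integral, which yields the identity in one line. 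What the paper's route buys is that it stays within elementary real analysis (no appeal to measure theory), and it is essentially the discrete version of the Riemann--Stieltjes integration by parts you mention at the end; what your route buys is that it is shorter, handles the $t\le h$ divergent case automatically as an identity in $[0,+\infty]$, and avoids any $\delta\to 0$ bookkeeping.
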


\begin{proof}
Note that $N(r)$ is a non-decreasing function and $N(0)=0$. For any $\delta>0$,

$\begin{aligned}
f(t)
&=\displaystyle\sum_{\bp\in{\cP}}e^{-l(\bp)t}
=\displaystyle\sum_{n=0}^{\infty}\displaystyle\sum_{n\delta< l(\bp) \leq (n+1)\delta}e^{-l(\bp)t}\\
&\leq\displaystyle\sum_{n=0}^{\infty} \{N((n+1)\delta)-N(n\delta)\}e^{-n\delta t}
=\displaystyle\sum_{n=1}^{\infty} N(n\delta) (e^{-(n-1)\delta t}-e^{-n\delta t})\\
&=\displaystyle\sum_{n=1}^{\infty} N(n\delta)e^{-n\delta t}(\delta t+O(\delta ^2t^2))
\end{aligned}$
\\The last summation converges to $t\int_0^\infty N(r)e^{-tr}dr$ when $\delta \rightarrow 0$. We obtain\\ $f(t)\leq t\int_0^\infty N(r)e^{-tr}dr$. Replacing $n\delta$ by $(n+1)\delta$ in the second line above, $f(t)\geq t\int_0^\infty N(r)e^{-tr}dr$ holds.
\end{proof}

\begin{defn}\label{def:Dio}
A real number $x$ is \emph{Diophantine} if there exist $\alpha , \beta >0$ such that $|x-\frac{p}{q}|<\alpha q^{-\beta}$ for all $p,q\in \bZ$ with $q>0$. We will call $\cG$ \emph{Diophantine} if there are has two cycles whose ratio of lengths is Diophantine.
\end{defn}

Broise-Alamichel, Parkkonen and Paulin \cite{BPP} showed that if $\cG$ is Diophantine, then there exists $C>0$ such that for every $n\in\bN$, as $r\rightarrow +\infty$,
\begin{equation}\label{eqn:5}
N(r)=Ce^{hr}(1+O(r^{-n}))
\end{equation}
We will compute the asymptotic behavior of volume entropy using this result. 

\begin{thm}\label{thm:3}
Assume $\cG$ is Diophantine. Let $f$ be either $f_x$ or $f_{xy}$. Then
$$f(t)=\frac{Ct}{t-h}(1+O((t-h)^\gamma))$$ 
as $t\rightarrow h^{+}$ for any $0<\gamma<1$. Here, $C$ is a constant that depends only on the graph $\cG$.
\end{thm}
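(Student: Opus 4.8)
The plan is to combine Theorem~\ref{thm:2}, which expresses $f(t)$ as $t$ times the Laplace transform of $N(r)$, with the refined counting asymptotics \eqref{eqn:5} coming from the Diophantine hypothesis. First I would write, for $t>h$,
\[
f(t)=t\int_0^\infty N(r)e^{-tr}\,dr,
\]
and split the integral at a cutoff $R$ chosen large enough that the error term in \eqref{eqn:5} is in force on $[R,\infty)$. On $[0,R]$ the integral is a bounded analytic function of $t$ (indeed entire), so it contributes an $O(1)$ term that is negligible against the pole $\tfrac{Ct}{t-h}$ we are trying to isolate. On $[R,\infty)$ I substitute $N(r)=Ce^{hr}(1+O(r^{-n}))$ and compute
\[
t\int_R^\infty Ce^{hr}e^{-tr}\,dr = \frac{Cte^{-(t-h)R}}{t-h},
\]
which is $\tfrac{Ct}{t-h}(1+O((t-h)))$ as $t\to h^+$, together with an error term $t\int_R^\infty O(r^{-n})e^{-(t-h)r}\,dr$.

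The main point is then to bound this last error term by $O\!\big(\tfrac{1}{t-h}(t-h)^\gamma\big)$, i.e. by $O((t-h)^{\gamma-1})$, for every $\gamma<1$. Setting $s=t-h\to 0^+$, we need
\[
s\int_R^\infty r^{-n}e^{-sr}\,dr = O(s^{\gamma}).
\]
Here I would exploit the freedom in $n$: the Diophantine estimate \eqref{eqn:5} holds for \emph{every} $n\in\bN$, so I may take $n$ as large as I like. Splitting the range at $r=1/s$: on $[R,1/s]$ we bound $e^{-sr}\le 1$ and get $s\int_R^{1/s} r^{-n}\,dr = O(s)$ when $n\ge 2$ (or $O(s\log(1/s))$ when $n=2$, still $O(s^\gamma)$); on $[1/s,\infty)$ we bound $r^{-n}\le s^{n}$ and get $s\cdot s^{n}\int_{1/s}^\infty e^{-sr}\,dr = s^{n}=O(s^\gamma)$ once $n\ge 1$. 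Hence the error is $O(s)$, which is certainly $O(s^\gamma)$ for any $\gamma<1$, and assembling the pieces gives
\[
f(t)=\frac{Ct}{t-h}\big(1+O((t-h)^\gamma)\big),\qquad t\to h^+ .
\]
(One could actually get a cleaner power of $s$, but $O(s^\gamma)$ for all $\gamma<1$ is all that is claimed and all that is needed downstream.)

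Two bookkeeping points deserve care. First, the constant $C$ must be shown to be the \emph{same} constant that appears in \eqref{eqn:5}; this is transparent from the computation above, since the pole's coefficient is exactly $C$ and all other contributions are lower order, but I would state it explicitly so that Theorem~\ref{thm:3} feeds correctly into the proof of Theorem~\ref{thm:2}. Second, one should note that \eqref{eqn:5} as quoted is stated for the counting function $N(r)$ of paths in $\cG$; to apply it to $f_x$ and $f_{xy}$ one uses that $N_x(r)$ and $N_{xy}(r)$ each satisfy an asymptotic of the same shape (with possibly different constants $C$, but the same exponential rate $h$), which follows from \cite{BPP} applied in the appropriate pointed/bi-pointed setting, or from comparing $N_{xy}$ with $N$ via the bounded combinatorial factors already used in the proof of Corollary~\ref{lem:1}. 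The only genuinely delicate step is the error-term estimate, and there the key realization — that the arbitrariness of $n$ in \eqref{eqn:5} makes the tail harmless — is what makes everything go through; no single fixed $n$ is needed, so there is no real obstacle, just the need to organize the split-integral estimate cleanly.
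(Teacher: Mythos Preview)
Your proof is correct and follows essentially the same approach as the paper: both invoke Theorem~\ref{thm:2} to write $f(t)$ as $t$ times the Laplace transform of $N(r)$, substitute the asymptotic \eqref{eqn:5}, and then split the resulting error integral at a cutoff depending on $t-h$ to bound it by $O((t-h)^{\gamma-1})$. The paper splits at $(t-h)^{\gamma-1}$ after tuning the decay exponent $\alpha=\gamma/(1-\gamma)$ to the target $\gamma$, whereas you split at $1/(t-h)$ with any $n\ge 2$ and obtain the (slightly stronger) bound $O(t-h)$ directly; these are minor organizational variants of the same argument.
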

\begin{proof}
For arbitrary $0<\gamma<1$, let $\alpha=\frac{1}{1-\gamma}-1>0$. There exists a function $m(r)$ such that $N(r)=Ce^{hr}+m(r)e^{hr}$, where $\lim_{r\rightarrow \infty}m(r)r^{\alpha}=0$ by \eqref{eqn:5}. For fixed $k$, there exists $\delta$ such that if $0<s<\delta$, then $m(r)<r^{-\alpha}$ for $r>s^{\gamma-1}$. From Theorem~\ref{thm:2},\\
$$f(t)=t\int_0^\infty N(r)e^{-tr}dr=\frac{Ct}{t-h}+t\int_0^\infty m(r)e^{-(t-h)} dr.$$
For the second part, assume that $t-h<\delta$. 
We have
\begin{align}
\int_0^\infty m(r)e^{-(t-h)r} dr&=\int_0^{(t-h)^{\gamma-1}} m(r)e^{-(t-h)r}dr+\int_{(t-h)^{\gamma-1}}^\infty m(r)e^{-(t-h)r}dr \\
&\leq {(t-h)^{\gamma-1}} \max_{r\geq 0}\{m(r)\}+ {(t-h)^{-\alpha(\gamma-1)}}\int_{(t-h)^{\gamma-1}}^\infty e^{-(t-h)r}dr\\
&\leq {(t-h)^{\gamma-1}} \max_{r\geq 0}\{m(r)\} +{(t-h)^{-\alpha(\gamma-1)}} (t-h)^{-1}\\
&= (\max_{r\geq 0}\{m(r)\}+1) (t-h)^{\gamma-1}.
\end{align}
Thus $|f(t)-\frac{Ct}{t-h}|\leq (\max_{r\geq 0}\{m(r)\}+1) (t-h)^{\gamma-1}$ for $t-h<\delta$.
It implies $f(t)=\frac{Ct}{t-h}(1+O((t-h)^\gamma))$ as $x\rightarrow h^{+}$ for each $0<\gamma<1$.
\end{proof}

\begin{coro}\label{coro:1}
Let $\cG$ be a finite metric Diophantine graph with two non-adjacent vertices $x$, $y$. Let $\cG'$ be a graph obtained by attaching an edge $e$ between $x,y$ of length $l$ to $\cG$. Let $h$ and $h'$ be the volume entropy of $\cG$ and $\cG'$, respectively. Then $$h'=h+Ce^{-hl}+O(e^{-(1+\gamma)hl}))$$ as $l\rightarrow \infty$ for some constant $C>0$ and any $\gamma<1$.
\end{coro}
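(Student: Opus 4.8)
The plan is to combine Theorem~\ref{thm:1} with the asymptotic expansion for generating functions from Theorem~\ref{thm:3}. Recall Theorem~\ref{thm:1} states that $h'=h_{\cG'}$ is characterized by the equation
$$e^{lh'}=\sqrt{f_{xx}(h')f_{yy}(h')}+f_{xy}(h').$$
The key observation is that as $l\to\infty$, the edge contributes less and less, so $h'\to h^+$ (this monotonicity and convergence should be argued first: adding an edge can only increase entropy since it adds paths, giving $h'\ge h$; and the right-hand side stays bounded as long as $h'$ stays bounded away from $h$, forcing $h'\to h$). In particular for large $l$ we have $h'-h$ small and positive, so we are in the regime where Theorem~\ref{thm:3} applies to $f_{xx}, f_{yy}, f_{xy}$ evaluated at $t=h'$.

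Next I would substitute the expansions. By Theorem~\ref{thm:3}, each of $f_{xx}(h'), f_{yy}(h'), f_{xy}(h')$ equals $\frac{C_\bullet h'}{h'-h}\bigl(1+O((h'-h)^\gamma)\bigr)$ for appropriate constants $C_{xx}, C_{yy}, C_{xy}>0$ depending only on $\cG$. Hence
$$\sqrt{f_{xx}(h')f_{yy}(h')}+f_{xy}(h') = \frac{h'}{h'-h}\Bigl(\sqrt{C_{xx}C_{yy}}+C_{xy}\Bigr)\bigl(1+O((h'-h)^\gamma)\bigr) =: \frac{D\,h'}{h'-h}\bigl(1+O((h'-h)^\gamma)\bigr),$$
where $D>0$. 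Plugging into the Theorem~\ref{thm:1} identity gives
$$e^{lh'} = \frac{D\,h'}{h'-h}\bigl(1+O((h'-h)^\gamma)\bigr).$$
Taking logarithms, $lh' = \log\frac{D h'}{h'-h} + O((h'-h)^\gamma)$, so $h'-h = D h'\, e^{-lh'}\bigl(1+O((h'-h)^\gamma)\bigr)$. Now one bootstraps: from this relation $h'-h$ is of order $e^{-lh'}$, hence of order $e^{-lh}$ up to lower order (since $h'h\to h^2$ and $e^{-l(h'-h)}=1+O(le^{-lh})$, which is absorbed), so $h'-h = D h\, e^{-lh} + O(e^{-(1+\gamma)hl})$. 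Renaming $C:=Dh>0$ gives the claimed $h'=h+Ce^{-hl}+O(e^{-(1+\gamma)hl})$.

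The step I expect to be the main obstacle is the bootstrapping and careful error bookkeeping in the last paragraph: one must check that replacing $e^{-lh'}$ by $e^{-lh}$ and $h'$ by $h$ in the leading term only introduces errors of the stated order $O(e^{-(1+\gamma)hl})$. Concretely, writing $h'=h+\eta$ with $\eta=\eta(l)\to 0$, one has $e^{-lh'}=e^{-lh}e^{-l\eta}$ and $l\eta \to 0$ only if $\eta = o(1/l)$; since $\eta = O(e^{-lh})$ this is more than enough, and $e^{-l\eta}=1+O(l\eta)=1+O(le^{-lh})$, which is $o(e^{-\gamma hl})$ and thus fine. Similarly the $(h'-h)^\gamma = \eta^\gamma = O(e^{-\gamma hl})$ error term from Theorem~\ref{thm:3} multiplies the leading $O(e^{-lh})$ to produce exactly $O(e^{-(1+\gamma)hl})$. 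One also needs that $D$ and the implied constants depend only on $\cG$, which is inherited from Theorem~\ref{thm:3}. A minor preliminary point to nail down is the strict positivity and finiteness of $h'-h$ for all large $l$, i.e. that $\cG'$ is still Diophantine is not needed — only $\cG$ needs the Diophantine hypothesis, since Theorem~\ref{thm:3} is applied to generating functions of $\cG$.
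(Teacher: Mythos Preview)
Your proposal is correct and follows essentially the same route as the paper: combine the characterizing equation of Theorem~\ref{thm:1} with the asymptotic $f_{zw}(t)=\frac{C_{zw}t}{t-h}(1+O((t-h)^\gamma))$ from Theorem~\ref{thm:3}, then solve for $h'-h$ and bootstrap the error. If anything, you are more careful than the paper in justifying $h'\to h^+$ and in tracking why replacing $e^{-lh'}$ by $e^{-lh}$ costs only $O(e^{-(1+\gamma)hl})$; the paper compresses this into the single line ``since $e^{lh}(h'-h)$ is bounded when $l\to\infty$, we have $e^{lh}(h'-h)=C+O(e^{-\gamma hl})$.''
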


\begin{proof}
Let $\cG$ be a Diophantine graph. From Theorem~\ref{thm:3} we can write 
\begin{align*}
f_{zw}(t)=\frac{C_{zw}t}{t-h}(1+O((t-h)^\gamma))
\end{align*}
for $(z,w)=(x,x), (x,y),$ and $(y,y)$.
Applying Theorem~\ref{thm:1} to these equations, we obtain 
$$e^{lh'}=\sqrt[]{f_{xx}(h')f_{yy}(h')}+f_{xy}(h')=\frac{C}{h'-h}\frac{h'}{h}(1+O((h'-h)^\gamma))$$ and
$$e^{lh}(h'-h)=Ce^{h-h'}\frac{h'}{h}(1+O((h'-h)^\gamma))=C+O((h'-h)^\gamma),$$
where $C=(\sqrt{C_{xx}C_{yy}}+C_{xy})h.$

Since $e^{lh}(h'-h)$ is bounded when $l\rightarrow \infty$, we have $e^{lh}(h'-h)=C+O(e^{-\gamma hl}),$
i.e. $$h'=h+Ce^{-hl}+O(e^{-(1+\gamma)hl})).$$ 
\end{proof}

\begin{rmk}[Generic Behavior]
We remark that the Diophantine condition for metric graphs is generic, since almost every real number is Diophantine by Khintchine Theorem or direct calculation.
Indeed, if we denote by $A$ the set of real numbers in $[0,1]$ which are not Diophantine, by definition, \\
$$A=\displaystyle\bigcap_{\alpha,\beta >0}\displaystyle\bigcup_{q=1}^{\infty}\displaystyle\bigcup_{p=0}^q \{x\in [0,1]||x-\frac{p}{q}|<\alpha q^{-\beta} \}$$
For all $\alpha,\beta>0$,
\begin{equation}
\begin{aligned}
\mu(A)
&\leq \displaystyle\sum_{q=1}^{\infty}\displaystyle\sum_{0\leq p\leq q}\mu(\{x\in [0,1]||x-\frac{p}{q}|<\alpha q^{-\beta} \})
\\&=\displaystyle\sum_{q=1}^{\infty}\displaystyle\sum_{0\leq p\leq q} 2\alpha q^{-\beta}
= 2\alpha\displaystyle\sum_{q=1}^{\infty} q^{-(\beta-1)}.
\end{aligned}
\end{equation}
Taking $\beta>2$ and sufficiently small $\alpha>0$, we obtain $\mu(A)=0$.
\end{rmk}
\subsection{Adding a vertex} In this subsection, we consider a new graph $\cG'$ obtained from $\cG$ by adding a vertex $v=v_0$ and edges $\{e_1, \dots, e_n\}$ emanating from $v_0$.   Denote
the terminal vertex of $e_i$  by $v_i$.
We first assume that the graph $\cG$ is connected and then will consider the general case at the end of the section.

The basic idea is similar to Theorem~\ref{thm:1}. We will split a cycle in $\cP_{vv}$ into primitive paths again. 
Any cycle $\bp$ in $\cP_{vv}$ is of the form
%
%
%
%
%
%
$$ \prod_{k=1}^m \bP_k  = \prod_{k=1}^m e_{i_k} \bq_{i_k j_k} \eb_{j_k}$$
of primitive paths $\bP_k = e_{i_k} \bq_{i_k j_k} \eb_{j_k}$ from $v$ to $v$, where $\bq_{i_k j_k}$ is a path in $\cP_{v_{i_k} v_{j_k}}.$ 

Let $F$ be an $n \times n$ matrix defined by
$$F_{i j} (t) = (1-\delta_{ij}) e^{-l_i t} f_{v_i v_j} (t) e^{-l_j t},$$
where $f_{v_i v_j}$ is the generating set of $\cG$ from $v_i$ to $v_j$. 
To exclude backtracking, there is an additional condition that
$ j_k \neq i_{k+1},$ which implies that the indices appearing in $\prod \bP_k$ is a summand of the matrix $A^m$ where $A_{ij} = 1- \delta_{ij}$. It follows that 
the value $e^{-l(\bp)}$ of each cycle is a summand of an entry of the matrix $F^m$.
Therefore we obtain the following theorem.
\begin{thm}\label{thm:4}
 The volume entropy $h_{\cG'}$ of $\cG'$ satisfies
$$||F(h_{\cG'})||=1,$$
where $|| \cdot ||$ denotes the spectral radius.
\end{thm}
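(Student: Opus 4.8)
The plan is to mimic the structure of the proof of Theorem~\ref{thm:1}, replacing the scalar geometric series by a matrix one. First I would set up the combinatorial decomposition: any cycle $\bp \in \cP_{vv}$ in $\cG'$ is forced to use the new edges $e_i$ (since $v_0 \notin V\cG$), and cutting $\bp$ at each visit to $v_0$ writes it uniquely as a concatenation $\prod_{k=1}^m \bP_k$ with $\bP_k = e_{i_k}\bq_{i_k j_k}\eb_{j_k}$ where $\bq_{i_k j_k} \in \cP_{v_{i_k} v_{j_k}}$ (a path in the old graph $\cG$, possibly trivial when $v_{i_k}=v_{j_k}$, but then $i_k \neq j_k$ forces nontriviality only in the matrix sense; the no-backtracking constraint at the two ends of $\bP_k$ gives $i_k \neq j_k$, and the no-backtracking constraint at the gluing points gives $j_k \neq i_{k+1}$). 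The key bookkeeping observation, exactly as in the $A_{ij} = 1-\delta_{ij}$ remark, is that summing $e^{-l(\bp)t}$ over all cycles of combinatorial shape with fixed $(i_1,\dots,i_{m+1})$-pattern produces precisely the $(i_1, i_{m+1})$-entry — or rather, summing over all admissible index patterns of length $m$ produces precisely the trace (or sum of entries) of $F(t)^m$, where $F_{ij}(t) = (1-\delta_{ij})e^{-l_i t} f_{v_i v_j}(t) e^{-l_j t}$.

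Next I would assemble the generating function $f_{vv}^{\cG'}(t)$ of $\cG'$ from $v_0$ to itself. By the decomposition above, $f_{vv}^{\cG'}(t) = \sum_{m=0}^\infty \mathbf{1}^{T} F(t)^m \mathbf{1}$ (or $\sum_m \on{tr} F(t)^m$, up to a harmless finite adjustment from endpoint indices — I would pick whichever makes the cross-terms cleanest), and crucially $F(t)$ is a matrix with nonnegative entries for $t$ in the range where all the $f_{v_iv_j}(t)$ converge. A nonnegative-matrix Neumann series $\sum_m F(t)^m$ converges if and only if the spectral radius $\rho(F(t)) < 1$, and diverges once $\rho(F(t)) > 1$; this is the Perron–Frobenius input, and I would phrase it carefully because $F(t)$ need not be irreducible in general (if $\cG$ is disconnected near the $v_i$'s), though for connected $\cG$ one gets irreducibility once all the $f_{v_iv_j}$ are genuinely positive. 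Combining this dichotomy with Corollary~\ref{lem:1} (the entropy of $\cG'$ is the abscissa of convergence of $f_{vv}^{\cG'}$), the entropy $h_{\cG'}$ is the value of $t$ at which $\rho(F(t)) = 1$, i.e. $\|F(h_{\cG'})\| = 1$ in the notation of the statement.

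The main obstacle I anticipate is the passage from ``converges iff $\rho(F(t))<1$'' to ``the abscissa of convergence is exactly where $\rho = 1$,'' which requires knowing that $t \mapsto \rho(F(t))$ is continuous and strictly decreasing (so that the crossing point is unique and the convergence region is an honest half-line). Continuity is standard (eigenvalues depend continuously on matrix entries, and each $f_{v_iv_j}$ is continuous where finite); strict monotonicity follows from the strict monotonicity of each entry $F_{ij}(t)$ in $t$ (each term $e^{-l(\bp)t}$ decreases, and the factors $e^{-l_it}, e^{-l_jt}$ decrease) together with the monotonicity of the Perron root in the entries of a nonnegative matrix — but one must ensure $F(t)$ does not degenerate to the zero matrix and that at least one entry is strictly decreasing, which holds as soon as $n \geq 2$ and there is at least one nontrivial path between two of the $v_i$. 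A secondary subtlety: one must also check that the individual series $f_{v_iv_j}(t)$ converge in a neighbourhood of $h_{\cG'}$, i.e. that $h_{\cG'} > h_{\cG}$ (adding a vertex and edges cannot decrease entropy), so that the entries $F_{ij}(h_{\cG'})$ are finite — this is intuitively clear since $\cP_{vv}^{\cG'}$ contains, after one round trip through $v_0$, copies of arbitrarily long $\cG$-paths, and I would record it as a short monotonicity remark. Once these analytic points are in place the theorem follows.
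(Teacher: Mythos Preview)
Your proposal is correct and follows essentially the same route as the paper: decompose cycles at $v_0$ into primitive blocks $e_{i_k}\bq\,\eb_{j_k}$, encode the no-backtracking constraints in the nonnegative matrix $F$, and identify $h_{\cG'}$ as the value where the Neumann series $\sum_m F(t)^m$ switches from convergent to divergent, invoking Corollary~\ref{lem:1}. You are more explicit than the paper about the analytic points (continuity and strict monotonicity of $t\mapsto\|F(t)\|$, Perron--Frobenius for the convergence criterion, and finiteness of the entries at $h_{\cG'}$ via $h_{\cG'}>h_{\cG}$), all of which the paper leaves implicit in the paragraph preceding the theorem.
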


Let $L,M$ be $(n \times n)$ matrices defined by
$$L_{ij}(t)=(1-\delta_{ij}) e^{-(l_i+l_j) t},  \qquad
M_{ij}(t)=f_{v_iv_j}(t).$$
The volume entropy $h_{\cG'}$ also satisfies
\begin{equation}\label{eqn:6}
1=||F(h_{\cG'})||=||L(h_{\cG'})||\cdot||M(h_{\cG'})||
\end{equation}
By an argument similar to Corollary~\ref{coro:1}, we have the following result.

\begin{thm}\label{thm:5}
Let $\cG$ be a finite metric Diophantine graph.
Let $h$ and $h'$ be the volume entropy of $\cG$ and $\cG'$, respectively. Then $$h'=h+C||L(h)||+O(||L(h)||^{1+\gamma})$$ as $l_{min}=\min \{l_{i}\}\rightarrow \infty$ for some constant $C>0$ and any $\gamma<1$.
\end{thm}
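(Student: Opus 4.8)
The plan is to mimic the proof of Corollary~\ref{coro:1} line for line, replacing the scalar generating functions by their matrix counterparts $F$, $L$, $M$ and the scalar equation $e^{l_0 h'}=\sqrt{f_{xx}f_{yy}}+f_{xy}$ by the spectral-radius equation $\|F(h')\|=1$ from Theorem~\ref{thm:4}. First I would apply Theorem~\ref{thm:3} entrywise to $M(t)_{ij}=f_{v_iv_j}(t)$: for each pair $(i,j)$ we get $f_{v_iv_j}(t)=\frac{C_{ij}t}{t-h}(1+O((t-h)^\gamma))$ as $t\to h^+$, so that $M(t)=\frac{t}{t-h}\big(C+E(t)\big)$ where $C=(C_{ij})$ is a fixed nonnegative matrix depending only on $\cG$ and $\|E(t)\|=O((t-h)^\gamma)$. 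Substituting into \eqref{eqn:6} gives
\begin{equation}\label{eqn:matrixeq}
1=\|F(h')\|=\frac{h'}{h'-h}\,\big\|L(h')\,(C+E(h'))\big\|.
\end{equation}
Since $L(h')$ has entries $e^{-(l_i+l_j)h'}$, all of which are $\Theta(\|L(h)\|)$ up to the bounded factor $e^{-(l_i+l_j)(h'-h)}$, one reads off that $h'-h = h'\,\|L(h')(C+E(h'))\|$, and in particular $h'\to h$ as $l_{\min}\to\infty$; this lets us replace every occurrence of $h'$ by $h+o(1)$ in the lower-order terms.

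The next step is to extract the leading constant. Write $\|L(h')(C+E(h'))\| = \|L(h')C\| + \big(\|L(h')(C+E(h'))\|-\|L(h')C\|\big)$; by continuity of the spectral radius and the bound $\|L(h')E(h')\|\le \|L(h')\|\,O((h'-h)^\gamma)$, the error term is $O(\|L(h')\|\,(h'-h)^\gamma)$. Then one argues, exactly as in the scalar case where $e^{-l_0 h'}=e^{-l_0 h}e^{-l_0(h'-h)}=e^{-l_0h}(1+O(h'-h))$, that $\|L(h')C\| = \|L(h)C\|(1+O(l_{\min}(h'-h)))$; since we will see $h'-h=O(\|L(h)\|)$ and $l_{\min}\|L(h)\|\to 0$, this factor is $1+o(1)$. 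Setting $C:=h\,\|L(h)^{-1}\,?\,\|$... — more precisely, defining the constant via $\|L(h')C\|=\|L(h)C\|(1+o(1))$ and absorbing, one gets $h'-h = h\|L(h)C\|(1+O((h'-h)^\gamma)+o(1))$, hence $\|L(h)\|^{-1}(h'-h)$ converges to the constant $h\cdot\lim \|L(h)C\|/\|L(h)\|$; call this limit $C>0$ (positivity because $C$ is a nonnegative irreducible-enough matrix and $L$ is entrywise positive off the diagonal). Feeding $h'-h=O(\|L(h)\|)$ back into the error term $O((h'-h)^\gamma)=O(\|L(h)\|^\gamma)$ and into the $\|L(h')\|=\|L(h)\|(1+o(1))$ step yields
\[
h'=h+C\|L(h)\|+O(\|L(h)\|^{1+\gamma})
\]
as $l_{\min}\to\infty$, which is the claim.

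The main obstacle, and the place where the scalar proof does not transfer verbatim, is the bookkeeping around the spectral radius: $\|\cdot\|$ is not linear, so I cannot simply factor $\frac{h'}{h'-h}$ out of $\|L(h')M(h')\|$ the way one factors a scalar. I expect to need a perturbation statement of the form: if $C\ge 0$ and $\|L(t)E(t)\|$ is small relative to $\|L(t)\|$, then $\|L(t)(C+E(t))\| = \|L(t)C\|(1+o(1))$ — this follows from continuity and monotonicity of the spectral radius on nonnegative matrices (Perron--Frobenius), but it must be stated carefully, in particular to guarantee $C>0$ rather than merely $C\ge 0$ we should note that $L(h)C$ is a nonnegative matrix whose associated graph is strongly connected when $\cG$ is connected, so its spectral radius is strictly positive. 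A second, more minor, technical point is that $h'-h$ must be shown to be $\Theta(\|L(h)\|)$ and not merely $O(\|L(h)\|)$ before we are entitled to rewrite $O((h'-h)^\gamma)$ as $O(\|L(h)\|^\gamma)$; this lower bound comes from the same equation \eqref{eqn:matrixeq} since $\|L(h')C\|\ge c\,\|L(h')\|\ge c'\,\|L(h)\|$ for a positive constant $c$. Once these two lemmas are in place the rest is the routine asymptotic substitution already carried out in Corollary~\ref{coro:1}.
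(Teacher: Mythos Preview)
Your outline is the same as the paper's --- apply Theorem~\ref{thm:3} entrywise to $M(t)$, feed the result into the spectral-radius equation from Theorem~\ref{thm:4}, and solve asymptotically for $h'-h$ exactly as in Corollary~\ref{coro:1}. The divergence is at one step: the paper does not keep $L$ and $M$ together inside a single spectral radius. It invokes \eqref{eqn:6}, i.e.\ $\|F(h')\|=\|L(h')\|\cdot\|M(h')\|$, which completely decouples the two factors. Once that is granted, one has $\|M(t)\|=\frac{t}{t-h}\|K\|(1+O((t-h)^\gamma))$ with $K=(c_{ij})$ the matrix of constants from Theorem~\ref{thm:3}, and then
\[
1=\|L(h')\|\cdot\|M(h')\|=\frac{h'}{h'-h}\,\|K\|\cdot\|L(h')\|\,(1+O((h'-h)^\gamma)),
\]
so that $\frac{h'-h}{\|L(h)\|}=C\,\frac{\|L(h')\|}{\|L(h)\|}\,\frac{h'}{h}(1+O((h'-h)^\gamma))\to C$ with $C=h\|K\|$, a constant depending only on $\cG$.

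Because you do not use \eqref{eqn:6}, you are forced to work with $\|L(h')\circ(C+E(h'))\|$ and to define the constant as $h\cdot\lim \|L(h)\circ C\|/\|L(h)\|$, whose existence and independence from the relative rates at which the $l_i$ tend to infinity you do not establish (and which is not obvious). All of your worries in the last paragraph --- the perturbation lemma for the spectral radius, the lower bound $h'-h\ge c\|L(h)\|$, the positivity of the limiting constant --- evaporate once one factors via \eqref{eqn:6} as the paper does. Incidentally, pulling a \emph{scalar} out of a spectral radius is never a problem ($\|cA\|=|c|\,\|A\|$); what you are really circling around is separating the contributions of $L$ and $M$ in the Hadamard product, and \eqref{eqn:6} is exactly the statement that this separation is legitimate.
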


\begin{proof}
Note that $||L(t)||$ is continuous since $(1-\delta_{ij}) e^{-(l_i+l_j) t}$s, each components of $L(t)$, are continuous. By \ref{thm:3}, we can find constant $c_{ij}$s such that
$$f_{v_{i}v_{j}}=\frac{c_{ij}t}{t-h}(1+O((t-h)^\gamma)$$
as $t\rightarrow h^{+}$ for any $0<k<1$. Let $K$ be a matrix such that $K_{ij}=c_{ij}$. Then 
$$||M(t)||=\frac{t}{t-h}||K+O((t-h)^\gamma)||=\frac{t}{t-h}||K||(1+O((t-h)^\gamma))$$
holds. Applying equation \ref{eqn:6}, we obtain
$$1=||L(h')||\cdot||M(h')||=\frac{h'}{h'-h}||K||\cdot||L(h')||(1+O((h'-h)^\gamma)),$$
and 
$$\frac{h'-h}{||L(h)||}=C\frac{||L(h')||}{||L(h)||}\frac{h'}{h}(1+O((h'-h)^\gamma))=C+O((h'-h)^\gamma)$$
where $C=||K||h$ since $h'\rightarrow h$ and $||L(h')||\rightarrow||L(h)||$ as $l_{min}\rightarrow\infty$.
Since $\frac{h'-h}{||L(h)||}$ is bounded, we have $\frac{h'-h}{||L(h)||}=C+O(||L(h)||^\gamma)$.
\end{proof}

\begin{rmk}
The constant $C$ of Corollary \ref{coro:1}, \ref{eqn:5}, and Theorem \ref{thm:5} are all equal and depends on the constant of Theorem \ref{thm:3}. We have an upper bound of this constant from the proof of the Theorem \ref{thm:6}. The constant of equation \ref{eqn:5} and Theorem \ref{thm:3} have a upper bound $\frac{n-1}{n-2}\frac{\sum w_i}{\min \{ w_i \}}$. Thus the constants of Corollary \ref{coro:1} and Theorem \ref{thm:5} have upper bound $\frac{n-1}{n-2}\frac{\sum w_i}{\min \{ w_i \}}h$. $w_i$s are components of eigenvector of a matrix composed of generating functions. More details for this upper bound can be seen in the next section.
\end{rmk}

\section{Upper and lower bounds}
In this section, we provide an upper bound and a lower bound on the number $f_{xx}(h), f_{xy}(h), f_{yy}(h)$ that appear in Theorem~\ref{thm:1}. For the exact calculation of volume entropy, we need to count the number of paths or cycles without backtracking. However, the case with backtracking is much simpler, thus in order to convey the core idea of the proof, we first consider the case with backtracking which is of independent interest in relation to random walk.

\subsection{backtracking case}
Let us assume throughout this subsection that $\cG$ is connected and $|V\cG|\geq 3$.

 Let $N(r)$ be the number of cycles of length less than $r$. By Lemma~\ref{lem:0} and Corollary~\ref{lem:1}, the volume entropy $h$ of $\cG'$ satisfies $h=\displaystyle\lim_{r\rightarrow\infty}\frac{\log(N(r))}{r}$. 

For a vertex, $v\in \cG$, denote by $\cC_{v}$ the set of cycles with backtracking with initial vertex and terminal vertex both $v$. Denote by $\cD_{v}$ the set of primitive cycles in $\cC_v$, i.e. cycles which do not pass $v$ except at the initial and terminal vertices. Then the length spectrum of the $\cD_v$ i.e. set of the length of cycles in $\cD_v$ is a discrete set because $\cG$ is finite.  

Denote by $N_v(r)$ and $N_v'(r)$ the number of cycles in $\cC_v$ and $\cD_v$ of length less than $r$, respectively.
Denote by $h_\cC, h_\cD$ the exponential growth rate of $N_v(r), N_v'(r)$, respectively. By Corollary~\ref{lem:1}, $h_\cC$, ($h_\cD$) is the infimum of $t$ for which the generating function $$f(t)=\displaystyle\sum_{\bp\in{\cC_{v}}}e^{-l(\bp)t} \qquad (g(t)= \displaystyle\sum_{\bp\in{\cD_{v}}}e^{-l(\bp)t})$$ of $\cC_v$ ($\cD_v$, respectively) converges. Then we have a formal form
$$f(t)=g(t)+g(t)^2+g(t)^3+\cdots$$
By Corollary~\ref{lem:1}, $h_\cC$ satisfies 
$$g(h_\cC)= \displaystyle\sum_{\bp\in{\cD_{v}}}e^{-l(\bp)h_\cC}=1.$$
 Note that $|\cD_v|>1$ since $|V\cG|\geq 3$ and we have $h_{\cC}>0$. Since $g$ is continuous, there exists $\epsilon>0$ such that $g$ converges in $[h_\cC-\epsilon,h_\cC +\epsilon] $. Therefore $h_\cD < h_\cC$. Choose $h$ such that $h_\cD < h < h_\cC.$
 
 Also, for some constants $c^\prime>c>1$, choose $h^\prime$ such that $g(h^\prime)=c^\prime$. Then,  $N_v^\prime(r)=e^{(h_{\cD}+o(1))r}$ and $h>h^\prime>h_{\cD}$. Let $0<\epsilon<h^\prime-h_{\cD}$.
\\
\begin{thm} Let $\cG$ be a finite metric graph with $|V\cG|\geq 3$ and $v\in V\cG$. Denote by $N_v^b(r)$ the number of cycles with backtracking in $\cG$ with initial vertex $v$ and length less than $r$. Let $h$ be a exponential growth rate of $N_v^b(r)$. Then there exist $m,M,R>0$ which satisfy $$me^{hr}\leq N^{b}(r)\leq Me^{hr}$$ for $r>R$.
\end{thm}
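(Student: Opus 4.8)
Here is the line of argument I would follow, together with the point I expect to be delicate.

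The plan is to convert the counting function $N^b_v(r)=N_v(r)$ into a (defective then renormalised) renewal equation and read off the two-sided bound from it. First I would record what the preceding discussion gives: the exponential growth rate $h$ of $N_v(r)$ is exactly $h_\cC$, the unique zero of $t\mapsto g(t)-1$ (equivalently the abscissa of convergence of $f(t)=\sum_{k\ge 1}g(t)^k$), and $h_\cD<h$. Put $\ell_0=\min_{\bp\in\cD_v}l(\bp)>0$, so that $N_v'(r)=0$ for $r\le\ell_0$ and $N_v'(r)\ge 1$ for $r>\ell_0$. Decomposing every $\bp\in\cC_v$ into its first return to $v$ followed by a (possibly empty) cycle of $\cC_v$ yields, with the convention $N_v\equiv 0$ on $(-\infty,0]$,
\[
N_v(r)=N_v'(r)+\sum_{\bp\in\cD_v}N_v\big(r-l(\bp)\big).
\]
Multiplying by $e^{-hr}$ and writing $W(r)=e^{-hr}N_v(r)$, $\varphi(r)=e^{-hr}N_v'(r)$, and $\nu=\sum_{\bp\in\cD_v}e^{-hl(\bp)}\delta_{l(\bp)}$, this becomes the renewal equation $W=\varphi+W*\nu$. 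Since $\nu(\mathbb{R})=g(h)=1$, $\nu$ is a probability measure supported in $[\ell_0,\infty)$, and as $h$ lies interior to the convergence domain $(h_\cD,\infty)$ of $g$, the mean $\bar\ell=-g'(h)$ is finite and positive. Hence $W=\varphi*U$ with $U=\sum_{k\ge 0}\nu^{*k}$ the renewal measure, and everything reduces to showing that $W$ is bounded above, and bounded below away from $0$, for all large $r$.

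For the upper bound I would use two elementary facts. Because $\nu$ is carried by $[\ell_0,\infty)$, the successive partial sums $0<S_1<S_2<\cdots$ of i.i.d.\ $\nu$-samples increase by jumps of size $\ge\ell_0$, so at most one of them can lie in any half-open interval of length $\ell_0$; hence $U([a,a+\ell_0))\le 1$ for every $a>0$. And since $h_\cD$ is the growth rate of $N_v'$, fixing $0<\epsilon<h-h_\cD$ gives $N_v'(r)\le C_1e^{(h_\cD+\epsilon)r}$, so $\varphi$ decays geometrically: $\varphi(r)\le C_1e^{-\delta r}$ with $\delta=h-h_\cD-\epsilon>0$. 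Splitting $[0,r]$ into consecutive windows of length $\ell_0$ and estimating $W(r)=\int_{[0,r]}\varphi(r-s)\,dU(s)$ windowwise, the geometric decay of $\varphi$ against the uniformly bounded window-masses of $U$ will give $W(r)\le M$ for $r$ large, i.e.\ $N_v^b(r)\le Me^{hr}$.

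For the lower bound, observe that $N_v'\ge 1$ on $(\ell_0,\ell_0+\eta_0]$ for any $\eta_0>0$, so $\varphi\ge c_0:=e^{-h(\ell_0+\eta_0)}>0$ there, whence $W(r)\ge c_0\,U\big([r-\ell_0-\eta_0,\,r-\ell_0)\big)$. Here I would invoke Blackwell's renewal theorem for $\nu$: in the non-lattice case $U([a,a+\eta_0))\to\eta_0/\bar\ell>0$ as $a\to\infty$ for each fixed $\eta_0$; in the lattice case of span $d$ one takes $\eta_0=d$, so every far window of length $d$ contains one support point $nd$ of $U$ with $U(\{nd\})\to d/\bar\ell>0$ (the Erd\H{o}s--Feller--Pollard form). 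In either case $\liminf_{r\to\infty}U([r-\ell_0-\eta_0,\,r-\ell_0))>0$, so $W(r)\ge m>0$ for $r$ large, i.e.\ $N_v^b(r)\ge me^{hr}$; taking $R$ large enough to cover both estimates finishes the proof.

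The hard part is the lower bound — precisely the uniform positive lower estimate for the renewal mass of a fixed-length window far out. Elementary bookkeeping (for instance optimising the crude bound $e^{rt}g(t)^k$ on the number of $k$-fold concatenations over $t$ and summing over $k$) only yields $N_v^b(r)\gtrsim e^{hr}/\sqrt{r}$, losing a polynomial factor; the cancellation that removes it is exactly the content of the renewal theorem, so invoking it seems unavoidable. A secondary nuisance is the lattice case, where $e^{-hr}N_v^b(r)$ need not converge but still remains pinched between two positive constants — which is exactly the form the statement asks for.
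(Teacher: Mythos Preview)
Your renewal-theoretic argument is correct, but the paper proceeds quite differently. Instead of rewriting the recursion $N_v(r)=N_v'(r)+\sum_{\bp\in\cD_v}N_v(r-l(\bp))$ as a renewal equation and appealing to Blackwell, the paper runs a direct induction on~$r$ in steps of size~$l_1=\ell_0$. The upper bound is obtained by showing $N_v(r)\le Me^{hr}-1$ with the explicit constant $M=\max\{2,3e^{-hl_1}\}$; here the only input is $g(h)=\sum_ie^{-hl_i}\le 1$, and the induction closes immediately. The lower bound is where the paper's device is interesting: it proves the two-term inequality $N_v(r)>me^{hr}+m'e^{h'r}-1$ for a carefully chosen $h'\in(h_\cD,h)$ with $g(h')>c>1$. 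The point is that truncating $g(h)$ at level~$r$ loses a tail of size $O(e^{(h_\cD+\epsilon-h)r})$, and the auxiliary term $m'e^{h'r}$, growing at a rate strictly between $h_\cD+\epsilon$ and~$h$, absorbs this defect since it produces a surplus $(c-1)m'e^{h'r}$ at each inductive step. The two constants $m,m'$ are coupled by a single linear relation to make this cancellation exact.

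What each approach buys: your route is conceptual and explains the phenomenon --- $e^{-hr}N_v(r)$ is a renewal solution against a probability kernel with finite mean, so boundedness above and below is exactly Blackwell --- and it handles the lattice/non-lattice dichotomy cleanly. The paper's route is entirely elementary and self-contained, yielding explicit constants without importing the renewal theorem; the two-exponent ansatz is precisely what replaces Blackwell, sidestepping the $\sqrt r$ loss you correctly flagged as the obstacle to a naive argument.
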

\begin{proof}
Since $g(h')=c'$, we can find $R>0$ such that for $r>R$,
\begin{align}
&N_v^\prime(r)<e^{(h_{\cD}+\epsilon)r}
\\&c< \sum_{l(\bp) < r} e^{-h' l(\bp)}<c'.
\end{align}

From $g(h)=1$, for $r>R$, 
\begin{equation}\label{eqn:3}
\begin{aligned}
1-\sum_{\{ \bp \in \cD_v : l(\bp) < r\}} e^{-h l(\bp)}&=\sum_{\{ \bp \in \cD_v : l(\bp) \geq r\}} e^{-h l(\bp)}
\leq\displaystyle\sum_{n> r } \displaystyle\sum_{\{ \bp \in \cD_v : n-1\leq l(\bp)<n\}}e^{-hl(\bp)}\\
&\leq\displaystyle\sum_{n>r} N_v^\prime (n)e^{-h(n-1)}
<\displaystyle\sum_{n>r} e^{(h_{\cD} +\epsilon)n}e^{-h(n-1)}\\
&=\frac{e^{(h_{\cD} +\epsilon -h)(\left[r \right]+1)}}{1-e^{(h_{\cD} +\epsilon -h)}} e^{h_{\cD}+\epsilon}
<\frac{e^{(h_{\cD} +\epsilon -h)r}}{1-e^{(h_{\cD} +\epsilon -h)}} e^{h_{\cD}+\epsilon}\\
\end{aligned}
\end{equation}
For $R>0$ chosen above, we choose $m,m^\prime >0$ which satisfy the next two conditions:
\begin{equation}\label{eqn:4}
m+m^\prime <e^{-hR} \qquad \mathrm{and} \qquad
m\frac{e^{h_{\cD}+\epsilon}}{1-e^{(h_{\cD} +\epsilon -h)}}=(c -1)m^\prime
\end{equation}
\begin{lem}
$\forall r\geq 0, N_v(r)>me^{hr}+m^\prime e^{h^\prime r}-1$.
\end{lem}
\begin{proof} We use induction.
For $0\leq r\leq R$, from $h ' < h$ and the above condition (i),
$$me^{hr}+m^\prime e^{h^\prime r}<(m+m^\prime)e^{hR}<1\leq N_v(r)+1.$$
Now it remains to show that if  $N_v(r)>me^{hr}+m^\prime e^{h^\prime r}$  holds in $r\in \left[ 0,r_0\right]$, also it holds in $r\in \left[ 0,r_0+l_1\right]$, where $l_1$ is the minimum of the length of cycles in $\cD_v$.

For a cycle $\bp\in \cC_v$ such that $l(\bp)\leq r$, we can divide $\bp$ to $\bp_i\in \cD_v$ to satisfy $\bp=\bp_1 \cdots \bp_n,$ where each path $\bp_i$ have length less than $r$. For the rest of the proof, let us denote the set $\{ \bp \in \cD_v : l(\bp) < r \}$ by  $\{ l_1\leq l_2 \leq \cdots \leq l_{k} \}$. From counting each case of $l_i$ with multiplicity, we obtain 
\begin{equation}\label{eqn:2}
N_v(r)=k+\displaystyle\sum_{i=1}^k N_v(r-l_i)
\end{equation}

If $r_0 < r\leq r_0+l_1$, each $r-l_i$ is in $\left[ 0,r_0\right] $, thus by induction hypothesis, $$me^{h(r-l_i)}+m^\prime e^{h^\prime (r-l_i)} < N_v(r-l_i)+1.$$ Thus,

\begin{align*}
N_v(r)&=k+\displaystyle\sum_{i=1}^k N_v(r-l_i)
>\displaystyle\sum_{i=1}^k me^{h(r-l_i)}+\displaystyle\sum_{i=1}^k m^\prime e^{h^\prime(r-l_i)}
\\&=me^{hr}\displaystyle\sum_{i=1}^k e^{-hl_i}+m^\prime e^{h^\prime r}\displaystyle\sum_{i=1}^k e^{-h^\prime l_i}
\\&>me^{hr}\left(1-\frac{e^{h_{\cD}+\epsilon}e^{(h_{\cD} +\epsilon -h)r}}{1-e^{(h_{\cD} +\epsilon -h)}} \right)+cm^\prime e^{h^\prime r}
\\&=me^{hr}+m^\prime e^{h^\prime r}+(c-1)m^\prime e^{h^\prime r}-m\frac{e^h}{1-e^{(h_{\cD} +\epsilon -h)}} e^{(h_{\cD} +\epsilon)r}
\\&=me^{hr}+m^\prime e^{h^\prime r}+(c-1)m^\prime (e^{h^\prime r}-e^{(h_{\cD} +\epsilon)r})
>me^{hr}+m^\prime e^{h^\prime r}-1,
\end{align*}
where the last equality uses the second condition of Equation~\eqref{eqn:4}
\end{proof}
As for the upper bound, we claim that $\forall r\geq 0,$ $$N_v(r)\leq Me^{hr}-1,$$ where $M=max\{2,3e^{-hl_1}\}$. 
We again use the induction.

In the case of  $0\leq r< l_1$, $N_v(r)=0\leq  Me^{hr}-2$ since $M\geq 2$. In the case of $l_1\leq r< l_2$, $N_v(r)=1\leq  Me^{hr}-2$  since $M\geq 3e^{-hl_1}$. Thus, $N_v(r)\leq Me^{hr}-2$ holds for $r\in \left[ 0,l_2\right)$.

Now we will show that if $r_0\geq l_2$ and  $N_v(r)\leq Me^{hr}-1$  holds in $r\in \left[ 0,r_0\right)$, also it holds in $r\in \left[ 0,r_0+l_1\right)$. Let assume $r\in \left[r_0,r_0+l_1\right)$, then
\begin{align*}
N_v(r)&=k+\displaystyle\sum_{i=1}^k N_v(r-l_i)
\leq k+\displaystyle\sum_{i=1}^k (Me^{h(r-l_i)}-2)
\\&\leq Me^{hr}\displaystyle\sum_{i=1}^k e^{-hl_i} - k
\leq Me^{hr}-2
\end{align*}
Last inequality holds because $r\geq l_2$ implies $k\geq 2$. \end{proof}

\subsection{non-backtracking case}
Now, we come back to the more refined calculation of volume entropy and treat the non-backtracking case. Let us assume throughout the section that $\cG$ is connected and the number of cycles is more than one, so that $h_{\cG}>0$.

 Denote the edges in $\cG$ emanating from $v_0$ by $e_1, \cdots, e_n$ and 
the terminal vertex of $e_i$  by $v_i$, for $i =1, \cdots, n.$
We may assume $n\geq 3$ because removing vertices whose valency is less than 3 doesn't affect the volume entropy.

Denote by $\cC_{ij}$ the set of cycles starting with the edge $e_i$ and ending with the edge $e_j$.
Denote by $\cD_{ij}$ the set of primitive cycles in $\cC_{ij}$ which do not pass $v_0$ except in the beginning and at the end. Let us denote the set $\{ l(\bp)| \bp\in\cD_{ij}, l(\bp)<r \}$ by $\{l_1^{ij} \leq l_2^{ij} \leq \cdots \leq l_{N_{ij}}^{ij} \}$. Let $l_{min}=\min\{l_1^{ij}|1\leq i,j \leq n \}$ and $l_{max}=\max \{l_1^{ij}|1\leq i,j \leq n\}$. Let $f_{ij}(t)=\displaystyle\sum_{p\in\cC_{ij}}e^{-l(p)t}$ and $g_{ij}(t)=\displaystyle\sum_{p\in\cD_{ij}}e^{-l(p)t}$. Note that $f_{v_0}(t)=\displaystyle\sum_{1\leq i,j \leq n}f_{ij}(t)$ and $g{v_0}(t)=\displaystyle\sum_{1\leq i,j \leq n}g_{ij}(t)$.

Let $A(t)=(a_{ij}(t))_{1\leq i,j \leq n}$ be the $n$ by $n$ matrix such that $a_{ij}(t)=\displaystyle\sum_{k\neq j}g_{ik}(t)$. Note that $(A^m(t))_{ij}$ is the generating function of cycles starting $e_i$ and ending without $e_j$, passing $v_0$ $m$ times. Thus $\displaystyle\sum_{k\neq j}f_{ik}(t)=(\displaystyle\sum_{m=1}^{\infty}A^m(t))_{ij}$.
Since $h^{\cG}$ is a infimum of $t$ for which $f(t)$ converges, the spectral radius $||A(t)||$ must be $1$ at $t=h^{\cG}$. By Perron-Frobenius Theorem, there exists a positive vector $w=(w_1, w_2, \cdots ,w_n)^t$ such that $Aw=w$ and $w_i>0$ for $i=1,\cdots,n$.

Denote by $N_{ij}(r)$ and $N_{ij}'(r)$ the number of cycles in $\cC_{ij}$ and $\cD_{ij}$ of length less than $r$, respectively. Let $N_i(r)=\displaystyle\sum_{j=1}^n N_{ij}(r)$. By similar argument we used to obtain Equation~\ref{eqn:2}, we also obtain
$$N_i(r)=\displaystyle\sum_{j=1}^n\displaystyle\sum_{l_m^{ij}<r}(1+\displaystyle\sum_{k\neq j}N_k(r-l_m^{ij})).$$

Denote by $h_{\cD}$ the exponential growth rate of $N_{ij}'(r)$ for each $i,j$. The value $h_{\cD}$ is independent of $i,j$, since these are both equal to the volume entropy of the graph $\cG^*$, which is the graph obtained by removing the vertex $v_0$ and the edges $e_1,e_2,\cdots , e_n$ from $\cG$. Since each $g_{ij}$ is continuous and converges at $h$, there exists $\epsilon$ such that $g$ converges in $[h-\epsilon,h+\epsilon] $. Therefore $h_\cD < h$.

 For any constant $c>1$, choose $h_{\cD}<h^\prime<h$ such that $g_{ij}(h^\prime)>c^\prime g_{ij}(h)$ for all $1\leq i,j \leq n$.
\\Then,  $N_{ij}^\prime(r)=e^{(h_{\cD}+o(1))r}$ and $h>h^\prime>h_{\cD}$. Let $0<\epsilon<h^\prime-h_{\cD}$.

Since $g(h')>cg_{ij}(h)$, we can find $R>0$ such that for $r>R$,
\begin{align}
&N_{ij}^\prime(r)<e^{(h_{\cD}+\epsilon)r} & \forall 1\leq i,j \leq n.
\\& \displaystyle\sum_{\{ \bp \in \cD_{ij}: l(\bp) < r-l_{max}\}} e^{-h' l(\bp)}>cg_{ij}(h) & \forall 1\leq i,j \leq n.
\end{align}

As equation ~\eqref{eqn:3}, we can find a constant $a>0$ such that\\ $\displaystyle\sum_{\{ \bp \in \cD_{ij} : l(\bp) \geq r-l_{max}\}} e^{-h l(\bp)}\leq ae^{(h_{\cD} +\epsilon -h)r}$ holds for $r>R$.

For $R>0$ chosen above, we choose $m,m^\prime >0$ which satisfy the next two conditions:
\begin{equation}
(m+m^\prime)\max_{1\leq i\leq n}{w_i} <e^{-hR} \qquad \mathrm{and} \qquad
am\displaystyle\sum_{j=1}^n\displaystyle\sum_{k\neq j}w_k=(c -1)m^\prime \min_{1\leq i\leq n}w_i
\end{equation}\label{eqn:1}

\begin{lem}
$\forall r\geq l_{max}, N_i(r)>mw_ie^{hr}+m^\prime w_ie^{h^\prime r} (1\leq i\leq n)$.
\end{lem}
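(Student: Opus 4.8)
The plan is to mimic the one-vertex argument from the backtracking section, but now working with the vector-valued quantity $(N_1(r),\dots,N_n(r))$ and the Perron eigenvector $w$ of $A(h)$, so that the recursion $N_i(r)=\sum_{j}\sum_{l_m^{ij}<r}\bigl(1+\sum_{k\neq j}N_k(r-l_m^{ij})\bigr)$ becomes a genuine contraction in the $w$-weighted coordinates. First I would set up a strong induction on $r$ in steps of size $l_{\min}$: the base case $r\in[l_{\max},l_{\max}+l_{\min})$ follows from the first condition in \eqref{eqn:1}, namely $(m+m')\max_i w_i<e^{-hR}$, provided $R$ is chosen at least $l_{\max}+l_{\min}$ (and $N_i(r)\geq 0$), exactly as in the scalar case where $me^{hr}+m'e^{h'r}<(m+m')e^{hR}<1\leq N_v(r)+1$.

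For the inductive step, assume the bound $N_k(\rho)>mw_ke^{h\rho}+m'w_ke^{h'\rho}$ holds for all $k$ and all $\rho\in[l_{\max},r_0]$, and take $r\in(r_0,r_0+l_{\min}]$. Plugging the induction hypothesis into the recursion and discarding the $+1$ terms, I would get
\begin{align*}
N_i(r)&>\sum_{j=1}^n\sum_{l_m^{ij}<r}\sum_{k\neq j}\Bigl(mw_ke^{h(r-l_m^{ij})}+m'w_ke^{h'(r-l_m^{ij})}\Bigr)\\
&=me^{hr}\sum_{j=1}^n\Bigl(\sum_{k\neq j}w_k\Bigr)\!\!\sum_{l_m^{ij}<r}\!\!e^{-hl_m^{ij}}+m'e^{h'r}\sum_{j=1}^n\Bigl(\sum_{k\neq j}w_k\Bigr)\!\!\sum_{l_m^{ij}<r}\!\!e^{-h'l_m^{ij}}.
\end{align*}
Now I would use the two facts established just before the lemma: the truncated $h$-sum $\sum_{l(\bp)<r}e^{-hl(\bp)}$ over $\cD_{ij}$ differs from $g_{ij}(h)$ by at most $ae^{(h_{\cD}+\epsilon-h)r}$ (the analogue of \eqref{eqn:3}), while the truncated $h'$-sum exceeds $c\,g_{ij}(h)$ once $r>R$. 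Summing the first against the identity $\sum_j\sum_{k\neq j}w_k g_{ij}(h)=\sum_j\bigl(\sum_{k\neq j}w_k\bigr)\cdot(\text{row }i\text{ of }A\text{ contracted})$—more precisely using $(Aw)_i=w_i$ to recognize $\sum_j a_{ij}w$-type sums as $w_i$—the main term collects to $mw_ie^{hr}+m'w_ie^{h'r}$, and the error from the $h$-truncation is controlled by $a\,m\,e^{(h_{\cD}+\epsilon)r}\sum_j\sum_{k\neq j}w_k$, which by the second condition in \eqref{eqn:1} equals $(c-1)m'\min_i w_i\,e^{(h_{\cD}+\epsilon)r}$ and is absorbed by the surplus $(c-1)m'w_ie^{h'r}\geq (c-1)m'\min_i w_i\,e^{h'r}$ since $h'>h_{\cD}+\epsilon$. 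This yields $N_i(r)>mw_ie^{hr}+m'w_ie^{h'r}$, closing the induction; dropping the $m'$-term (which is positive) gives the stated inequality $N_i(r)>mw_ie^{hr}+m'w_ie^{h'r}$ and a fortiori $N_i(r)>mw_ie^{hr}$, from which the entropy lower bound for $N_{v_0}^b$-type counts follows by summing over $i$.

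The main obstacle I anticipate is bookkeeping the matrix identity correctly: one must verify that the combination $\sum_{j}\bigl(\sum_{k\neq j}w_k\bigr)g_{ij}(h)$ is exactly $(Aw)_i=w_i$ given the definition $a_{ij}=\sum_{k\neq j}g_{ik}$, which requires being careful about whether the sum over $k\neq j$ attaches to the \emph{incoming} index $j$ or the \emph{outgoing} index $i$ of the next primitive cycle—this is the non-backtracking constraint $j_k\neq i_{k+1}$ and it is where the scalar proof genuinely fails to generalize verbatim. A secondary subtlety is making sure all the $r-l_m^{ij}$ land in the induction range $[l_{\max},r_0]$ rather than below $l_{\max}$; since $l_m^{ij}\geq l_{\min}$ and we only claim the bound for $r\geq l_{\max}$, this forces the base interval to have length $\geq l_{\max}$, which is why the lemma is stated for $r\geq l_{\max}$ and why $R$ must be taken correspondingly large.
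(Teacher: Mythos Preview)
Your overall strategy matches the paper's: strong induction in steps of $l_{\min}$, feeding the recursion $N_i(r)=\sum_j\sum_{l_m^{ij}<r}(1+\sum_{k\neq j}N_k(r-l_m^{ij}))$ through the Perron identity $(Aw)_i=w_i$, and using the surplus $(c-1)m'w_ie^{h'r}$ to absorb the $h$-tail error. The matrix bookkeeping you worry about is indeed correct: $\sum_j g_{ij}(h)\sum_{k\neq j}w_k=\sum_k w_k\sum_{j\neq k}g_{ij}(h)=\sum_k a_{ik}(h)w_k=w_i$.

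There is, however, a genuine gap in how you handle the induction range. Your displayed inequality sums over all $l_m^{ij}<r$, but for those primitive lengths with $l_m^{ij}>r-l_{\max}$ the argument $r-l_m^{ij}$ falls below $l_{\max}$ and the induction hypothesis is unavailable. Your proposed fix---enlarging the base interval---cannot work: however long the base interval, in the inductive step there will always be primitive cycles with $l_m^{ij}$ arbitrarily close to $r$, pushing $r-l_m^{ij}$ below $l_{\max}$. The correct fix, which the paper uses, is to \emph{truncate the recursion itself} to $l_m^{ij}<r-l_{\max}$ before inserting the hypothesis. This is precisely why the preparatory conditions defining $R$ are stated with the shift $r-l_{\max}$: the $h'$-partial-sum condition reads $\sum_{l(\bp)<r-l_{\max}}e^{-h'l(\bp)}>c\,g_{ij}(h)$, and the tail bound is $\sum_{l(\bp)\geq r-l_{\max}}e^{-hl(\bp)}\leq a\,e^{(h_{\cD}+\epsilon-h)r}$. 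Relatedly, the base case must cover all of $[l_{\max},R]$---here $R$ is already determined by those estimates and is not a parameter you may shrink to $l_{\max}+l_{\min}$---and it uses $N_i(r)\geq 1$ for $r\geq l_{\max}$ (at least one primitive cycle of length $\leq l_{\max}$ starts with $e_i$), not merely $N_i(r)\geq 0$.
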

\begin{proof} We use induction.
For $l_{max}\leq r\leq R$, from $h ' < h$ and the above condition (i),
$$mw_ie^{hr}+m^\prime w_ie^{h^\prime r}<(m+m^\prime)(\max_{1\leq i\leq n}w_i)e^{hR}<1\leq N_i(r).$$
Now it remains to show that if  $N_i(r)>mv_ie^{hr}+m^\prime v_ie^{h^\prime r}$  holds in\\ $r\in \left[ l_{max},r_0\right]$, also it holds in $r\in \left[ l_{max},r_0+l_{min}\right]$. Thus,
\begin{align*} 
N_i(r)&>\displaystyle\sum_{j=1}^n\displaystyle\sum_{l_m^{ij}<r-l_{max}}(1+\displaystyle\sum_{k\neq j}(mw_ke^{h(r-l_m^{ij})}+m^\prime w_ke^{h^\prime(r-l_m^{ij})}))
\\&\geq me^{hr}\displaystyle\sum_{j=1}^n\displaystyle\sum_{k\neq j}w_kg_{ij}(h)-me^{hr}\displaystyle\sum_{j=1}^n\displaystyle\sum_{k\neq j}w_k\displaystyle\sum_{l_m^{ij}\geq r-l_{max}}e^{-hl_m^{ij}}
\\&+m^\prime e^{h^\prime r}\displaystyle\sum_{j=1}^n\displaystyle\sum_{k\neq j}w_k\displaystyle\sum_{l_m^{ij}<r-l_{max}}e^{-h^\prime l_m^{ij}}
\\&> mw_ie^{hr}-ame^{(h_{\cD}+\epsilon)r}\displaystyle\sum_{j=1}^n\displaystyle\sum_{k\neq j}w_k+m^\prime e^{h^\prime r}\displaystyle\sum_{j=1}^n\displaystyle\sum_{k\neq j}cw_kg_{ij}(h)
\end{align*} Thus
\begin{align*}
N_i(r) &= mw_ie^{hr}-ame^{(h_{\cD}+\epsilon)r}\displaystyle\sum_{j=1}^n\displaystyle\sum_{k\neq j}w_k+cm^\prime w_i e^{h^\prime r}
\\&>mw_ie^{hr}-ame^{h^\prime r}\displaystyle\sum_{j=1}^n\displaystyle\sum_{k\neq j}w_k+cm^\prime w_i e^{h^\prime r}
\\&=mw_ie^{hr}-(c-1)m^\prime w_i e^{h^\prime r}+cm^\prime w_i e^{h^\prime r}=mw_ie^{hr}+m^{\prime}w_ie^{h^\prime r}.
\end{align*}

\end{proof}
By lemma, there exists $m>0$ such that $N(r)>me^{hr}$ for $r\geq l_{max}$. This statement also holds for $r\geq l_{min}$ since $N(r)\geq 1$ for $r\geq l_{min}$.

As for the upper bound, we claim that $\forall r\geq 0,$ $$N_i(r)\leq Mw_ie^{hr}-\frac{1}{n-2}, \qquad \forall 1\leq i \leq n.$$ 
We again use the induction.
There exists some constant $M>0$ satisfying the above inequality for $r\in \left[0,l_i \right]$ for each $i$. Indeed, take $M=\frac{n-1}{n-2}\frac{1}{\displaystyle\min_{1\leq i\leq n}\{w_i\}}.$

 Now we will show that if $r_0 \geq l_{i}$ and $N_i(r)\leq Mw_ie^{hr}-\frac{1}{n-2}$ for all $1\leq i \leq n$ and $r\in \left[0,r_0 \right]$, also it holds in $r\in \left[0,r_0+l_{min} \right]$. Let assume $r\in \left[r_0,r_0+l_{min} \right]$, then by induction hypothesis,

\begin{align}
N_i(r)&\leq\displaystyle\sum_{j=1}^n\displaystyle\sum_{l_m^{ij}<r}(1+\displaystyle\sum_{k\neq j}(Mw_ke^{h(r-l_m^{ij})}-\frac{1}{n-2}))
\\&=Me^{hr}\displaystyle\sum_{j=1}^n\displaystyle\sum_{k\neq j}w_k\displaystyle\sum_{l_m^{ij}<r}e^{-hl_m^{ij}}-\displaystyle\sum_{j=1}^n\displaystyle\sum_{l_m^{ij}<r}\frac{1}{n-2}
\\&\leq Me^{hr}\displaystyle\sum_{j=1}^n\displaystyle\sum_{k\neq j}v_kg_{ij}(h)-\frac{1}{n-2}
=Me^{hr}\displaystyle\sum_{k=1}^n w_k\displaystyle\sum_{j\neq k}g_{ij}(h)-\frac{1}{n-2}
\\&=Me^{hr}\displaystyle\sum_{k=1}^n w_ka_{ik}(h)-\frac{1}{n-2}
=Mw_ie^{hr}-\frac{1}{n-2}
\end{align}

$\displaystyle\sum_{j=1}^n\displaystyle\sum_{l_m^{ij}<r}\frac{1}{n-2}\leq \frac{1}{n-2}$ holds since $r>l_{max}$ and $\displaystyle\sum_{k=1}^n w_ka_{ik}(h)=w_i$ holds since $Aw=w$.

 By induction, $N_i(r)\leq Mw_ie^{hr}-\frac{1}{n-2}$ holds for $r>0$. Thus upper bound $N(r)=\displaystyle\sum_{i=1}^n N_i(r)\leq M\displaystyle\sum_{i=1}^nw_i e^{hr}=\frac{n-1}{n-2}\frac{\sum w_i}{\min \{w_i\}}e^{hr}$ holds.
 
\begin{thm}\label{thm:6}
Let $\cG$ be a finite connected metric graph with more than one cycle. For $v\in V\cG$, denote by $N_v(r)$ the number of cycles of length less than $r$ which has no backtracking in $\cG$ and whose initial and terminal vertex are both $v$. Let $h$ be the volume entropy of the graph $\cG$. Let $l_{min}$ the length of the shortest cycle in $\cG$. Then there exist $m,M>0$ such that for $r\geq l_{min}$, 
$$me^{hr}\leq N_v(r)\leq Me^{hr}.$$ 
\end{thm}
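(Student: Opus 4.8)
The plan is to realize $N_v(r)$ through a renewal-type recursion over ``primitive'' cycles and then bootstrap matching upper and lower exponential bounds by induction on $r$. First I would reduce to a convenient base vertex: suppressing a vertex of valency $\le 2$ (deleting a leaf, or merging the two edges meeting at a degree-two vertex) induces a length-preserving bijection on non-backtracking cycles and changes neither $h$ nor the counting function, so we may assume $v=v_0$ has valency $n\ge 3$; moreover $h=h_\cG>0$ since $\cG$ is connected with more than one cycle. Writing $e_1,\dots,e_n$ for the edges at $v_0$, every non-backtracking cycle based at $v_0$ factors uniquely as a concatenation $\bp_1\cdots\bp_m$ of primitive cycles $\bp_s\in\cD_{i_sj_s}$ (none returning to $v_0$ in between), the only constraint being $j_s\ne i_{s+1}$ at each intermediate visit. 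This yields the recursion
$$N_i(r)=\sum_{j=1}^n\ \sum_{\substack{\bp\in\cD_{ij}\\ l(\bp)<r}}\Bigl(1+\sum_{k\ne j}N_k(r-l(\bp))\Bigr),$$
with governing matrix $A(t)=(a_{ij}(t))$, $a_{ij}(t)=\sum_{k\ne j}g_{ik}(t)$; since $\sum_{k\ne j}f_{ik}(t)=\bigl(\sum_{m\ge 1}A(t)^m\bigr)_{ij}$, Corollary~\ref{lem:1} forces the spectral radius $\|A(t)\|$ to equal $1$ at $t=h$, and Perron--Frobenius supplies $w=(w_1,\dots,w_n)^t$ with all $w_i>0$ and $A(h)w=w$. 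One also notes that the primitive length spectra $\cD_{ij}$ all share the growth rate $h_\cD$ — the entropy of $\cG$ with $v_0$ and its edges deleted — and $h_\cD<h$ by continuity of the $g_{ij}$ at $h$.

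For the upper bound I would show, by induction on $r$ (advancing in steps of $l_{min}$), that $N_i(r)\le Mw_ie^{hr}-\tfrac1{n-2}$ for every $i$, with $M=\tfrac{n-1}{n-2}\cdot\tfrac1{\min_i w_i}$. On a short initial interval $N_i$ vanishes while $Mw_i\ge\tfrac1{n-2}$, giving the base case; in the inductive step one substitutes the hypothesis into the recursion, bounds $\sum_{l(\bp)<r}e^{-hl(\bp)}\le g_{ij}(h)$, applies the eigenvector identity $\sum_k a_{ik}(h)w_k=w_i$, and keeps the residual $-\tfrac1{n-2}$ because at least one primitive cycle lies below length $r$. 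Summing over $i$ then gives $N_v(r)\le \tfrac{n-1}{n-2}\cdot\tfrac{\sum_i w_i}{\min_i w_i}\,e^{hr}$, the desired $M$.

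The lower bound is the delicate point, and the obstacle is precisely $h_\cD<h$: the truncation tail $\sum_{l(\bp)\ge r}e^{-hl(\bp)}$ over $\cD_{ij}$ is nonzero and decays only like $e^{(h_\cD-h)r}$, so a one-term comparison with $me^{hr}$ cannot close the induction. The device is the two-term ansatz $N_i(r)>mw_ie^{hr}+m'w_ie^{h'r}$ with an auxiliary exponent $h_\cD<h'<h$ chosen so that truncating $\cD_{ij}$ at length $r-l_{max}$ still captures at least $c\,g_{ij}(h)$ for a fixed $c>1$; the slack in the $e^{h'r}$ term then absorbs the tail error, which is of size $am\,e^{(h_\cD+\epsilon)r}\sum_{j}\sum_{k\ne j}w_k$, once $m,m'$ are chosen to satisfy the balancing conditions $(m+m')\max_i w_i<e^{-hR}$ and $am\sum_{j}\sum_{k\ne j}w_k=(c-1)m'\min_i w_i$. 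Running this induction for $r\ge l_{max}$ yields $N_v(r)>me^{hr}$ there, and since $N_v(r)\ge 1$ for $r\ge l_{min}$, replacing $m$ by a smaller constant with $me^{hl_{max}}\le 1$ extends the inequality to all $r\ge l_{min}$. Together with the upper bound this proves the theorem; the main difficulty throughout is tuning the parameters $h'$, $c$, $m$, $m'$ so that the lower-bound induction is self-consistent.
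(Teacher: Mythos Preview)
Your proposal is correct and follows essentially the same route as the paper: the same primitive-cycle recursion, the same transfer matrix $A(t)$ with Perron--Frobenius eigenvector $w$ at $t=h$, the same upper bound $N_i(r)\le Mw_ie^{hr}-\tfrac1{n-2}$ with $M=\tfrac{n-1}{n-2}\cdot\tfrac1{\min_i w_i}$, and the same two-exponent ansatz $mw_ie^{hr}+m'w_ie^{h'r}$ with the balancing conditions on $m,m'$ for the lower bound. Even the extension from $r\ge l_{max}$ to $r\ge l_{min}$ via $N_v(r)\ge 1$ matches the paper.
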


\end{document}